\crefname{hypothesis}{Hypothesis}{Hypotheses}
\newcommand{\e}{\ensuremath{\epsilon}}
\newcommand{\Vvert}{\vert \hspace{-1pt} \vert \hspace{-1pt} \vert}
\def \1{\mathbf{1}}
\begin{document}

\newcommand\relatedversion{}

\title{\Large Estimation in linear  high dimensional Hawkes processes: a Bayesian approach \relatedversion}
    \author{Judith  Rousseau\thanks{CEREMADE, Universit\'e Paris Dauphine -PSL  and University of Oxford (\email{rousseau@ceremade.dauphine.fr} )} 
    \and Vincent Rivoirard \thanks{CEREMADE, Universit\'e Paris Dauphine -PSL (\email{rivoirard@ceremade.dauphine.fr} )}
    \and D\'eborah Sulem \thanks{Faculty of Informatics, Universita della Svizzera italiana,
  ( \email{deborah.sulem@usi.ch} })
  }

\date{}

\maketitle


\fancyfoot[R]{\scriptsize{Copyright \textcopyright\ 20XX by SIAM\\
Unauthorized reproduction of this article is prohibited}}







\begin{abstract}

    In this paper we study the frequentist properties of Bayesian approaches in linear high dimensional Hawkes processes in a sparse regime where the number of interaction functions acting on each component of the Hawkes process is much smaller than the dimension. We consider two types of loss function: the empirical $L_1$ distance between the intensity functions of the process and the $L_1$ norm on the parameters (background rates and interaction functions). Our results are the first results to control the $L_1$ norm on the parameters under such a framework. They are also the first results to study Bayesian procedures in high dimensional Hawkes processes.
\end{abstract}
\section{Introduction} \label{sec:Intro}

Multivariate Hawkes processes have been widely used both in machine learning and statistics as a versatile model for dependent point processes, with a wide range of applications such as  neurosciences, social sciences and finance.  A $K$ dimensional Hawkes process  $N = (N^1, \cdots, N^K)$  is a marked point process whose intensity process $\lambda_t = (\lambda_t^k)_{k=1}^K$ is defined by 
\begin{equation} \label{intensity}
\lambda_t^{(k)}(f_k)=\nu_k+\sum_{\ell=1}^K\int_{t-A}^{t^-}h_{\ell k}(t-s)dN^{\ell}(s), \quad f_k = (\nu_k, h_{\ell k}, \ell \leq K), \quad t \in \mathbb R
\end{equation}
with $\nu_k \in \mathbb R_+$ is called the background rate on $N^k$  and $h_{\ell k} : \mathbb R_+ \rightarrow \mathbb R_+ $ is the interaction function from $N^\ell$ on $N^k$, for $\ell , k \in [K]= \{1,\cdots, K\}$. 
Recall that $\lambda_t^{(k)} dt = P( N^k \text{ has a jump in } [t, t+dt)| \mathcal G_t) $ where $\mathcal G_t$ is the history of $N$ up to time $t$.

Typically one observes $N$ over a time period $[0,T]$ and the aim is to estimate the true parameter  $f^0 = (f_1^0, \cdots, f_k^0)$, or functionals of $f^0$ such as the graph of interactions or equivalently its adjacency matrix $\Delta(f^0)$ whose components are $ \Delta_{\ell k}(f^0) = \1_{ \|h_{\ell k}^0\|_1 >0}$. Algorithms and statistical methods have been proposed and studied  over the last decade or so but only recently theoretical properties have been derived when the dimension $K$ of the Hawkes process is large. The high dimension of the process poses both theoretical and computational challenges due to the fact that the $K^2$ interaction functions need to be estimated in addition to the $K$ background rates. This question is crucial, for example in neuroscience, where we are able to record the activity of a large number of neurons simultaneously. To address such challenges it is natural to consider a sparse regime where $f$ can be parametrized by a much lower dimensional parameter  and there exist different types of sparsity regimes considered in the literature.  A first type of sparsity is defined by assuming that for all $k\in [K]$ only a small number of interaction functions $h_{\ell k} $ are non null, as in \cite{chen2019}, \cite{cai2024latent}  and \cite{wang2025statistical}, while an alternative type of sparsity consists in assuming that $f$ can be represented by a low dimensional tensor, as in \cite{bacry-etal-2020} and \cite{Tang-Li-2023}. In all but the last paper the results do not allow to directly control the difference between the estimator $\hat f$ and the true parameter $f^0$, when such an estimator is proposed, but either control the empirical $L_2$ distance  $d_{2,T} (\hat f_k, f_k^0) =T^{-1}\int_0^T(\lambda_t^{(k)}(\hat f_k)- \lambda_t^{(k)}( f_k^0) )^2 dt$ or estimate a superset containing  $\{ (\ell, k); \|h_{\ell k}\|_1>0 \}$ or in the case of \cite{wang2025statistical}, under a parametric assumptions on the $h_{\ell,k}$, estimate $h_{\ell,k}$ for a given $(\ell, k)$. As will appear from our results the rates obtained in 
\cite{wang2025statistical} seem suboptimal, but it is to be noted that these rates are not the main objective of \cite{wang2025statistical}.

Therefore the question of the (i) possiblity  to estimate $f$ in high dimensions and (ii) the best achievable rates of convergence in direct norms on $f$ is still open, notably when non parametric assumptions are made on  the interaction functions $h_{\ell k}$. Also, none of the methods cited above are Bayesian. Bayesian nonparametric estimation of multivariate Hawkes processes has been studied theoretically by \cite{donnet18} and \cite{sulem2024bayesian} but these papers deal with  the fixed $K$ case and the dependence of $K $ in their rates is exponential. 

In this paper we study a family of Bayesian approaches to the nonparametric and parametric estimation of high dimensional linear Hawkes processes. We derive posterior contraction rates, which in turns imply the same convergence rates for some Bayesian estimators of $f$. 
We study two types of loss functions, the empirical $L_1$ loss, in a similar spirit to \cite{bacry-etal-2020} and \cite{cai2024latent} (see Theorem \ref{th:gene:d1T}) and the direct $L_1$ loss on $f$ (see Theorem \ref{th:gene:L1}), for which we obtain the oracle rate - up to $\log T$ or $\log K$ terms, i.e. rates that would be obtained if we knew the graph of interactions, i.e. $\Delta (f^0)$. Then we study a number of prior families for which we derive the posterior contraction rates under various smoothness conditions on the true interaction functions $h_{\ell k}^0$.

\section{Bayesian inference for high dimensional Hawkes process}

\subsection{High dimensional Hawkes processes} \label{sec:model}
We observe a  multivariate Hawkes process $N = (N^1, \cdots, N^K)$  where $K$ is possibly large and whose  intensity process $\lambda_t = (\lambda_t^{(k)})_{k=1}^K$ is defined in \eqref{intensity} and depends on $f= (f_1, \cdots, f_K)$. Then the log-likelihood of $N$ at $f$ is given by, see \cite{daley01}, 
\begin{equation}\label{likeli}
L_T(f) = \sum_{k=1}^K L_{T,k}(f_k), \quad L_{T,k}(f_k)= \int_0^T \lambda^{(k)}(f_k)dN_t^k - \int_0^T \lambda^{(k)}(f_k)dt .
\end{equation}
Throughout the paper we assume that the supports of the intensity functions $h_{\ell k}$ are included in $[0,A]$ where $A>0$ is a given constant. 
We denote by $\rho(f) $ the matrix in  $\mathbb R_+^{K\times K}$ defined by $\rho_{\ell k} (f) = \int_0^A h_{\ell k}(x) dx $, is the mass of $h_{\ell k}$. When there is no ambiguity we will write $\rho $ in place of $\rho(f)$. 

Throughout the paper we assume that the true generating process is a Hawkes process as defined in \eqref{intensity} with parameter $f^0 = (f_k^0)_{k\in [K]}$ such that the process is stationary, i.e. the spectral radius  $SpR(\rho^0)<1$ with $\rho^0 = \rho(f^0)$. We write $\mathbb P_0(\cdot)$ (resp. $\mathbb E_0(\cdot ) $) for the probability distribution of $N$ associated to $f^0$  (resp. for any expectation associated to $\mathbb P_0$).




We are interested in the estimation of $f^0$ in the case where $K$ is large but under sparsity assumptions on $\rho^0$ and we focus on Bayesian estimation procedures. Before introducing the family of Bayesian approaches we consider, we present the three main hypotheses on $f^0$ that will be considered throughout the paper. 

\begin{hypothesis}\label{cond:suprho}
We assume that  there exist $1> c>0$ and $R_\infty, R_1<\infty$  such that $\rho^0= \rho(f^0)$  verifies: for all $n\geq 1$ 
$$ \Vvert (\rho^0)^n\Vvert_{\infty } = \max_{\ell \leq K} \sum_{k=1}^K ((\rho^0)^n)_{\ell, k} \leq R_\infty c^n, \quad  \Vvert (\rho^0)^n\Vvert_{1 } =  \Vvert (\rho^{0Tr})^n\Vvert_{\infty }  \leq R_1 c^n $$
where $\rho^{0Tr} = \rho(f_0)^{Tr}$ is the transpose of $\rho^0$ and $\Vvert M\Vvert_{\infty} $ (resp. $\Vvert M\Vvert_{1} $) is the operator norm of the matrix $M$ related to the norm $\| \cdot \|_\infty $ (resp. $\| \cdot \|_1$). 
\end{hypothesis}

\begin{hypothesis}\label{cond:sparsity1}
 We denote $ S_0(k) = \{ \ell, \rho_{\ell k}^0 > 0 \} \subset [K]$ and $S_0$ the list made of the sets $S_0(k)$, $k\in [K]$.
 
There exists $s_0< \infty$  such that for all $k$, 
$ |S_0(k)| \leq s_0 $, where $|S_0(k)|$  is the cardinal of $S_0(k)$.

\end{hypothesis}

\begin{hypothesis}\label{cond:sparsity2}
There exist $\bar h_0, c_0,c_1 <\infty $ and $A>0$  such that $\sup_{x>A}h_{\ell k }^0(x) = 0$,
$$\max_{\ell, k} \|h_{\ell, k}^0\|_\infty \leq \bar h_0 < \infty , \quad  c_0 \leq \inf_k\nu_k^0 \leq \sup_k\nu_k^0 \leq c_1  .$$
These constants do not depend on $K$ and $T$.
\end{hypothesis}

A consequence of Hypotheses \ref{cond:suprho} and  \ref{cond:sparsity2} is that we can bound uniformly $\mu_k^0 = \mathbb E_0(\lambda_t^{(k)}f_k^0))$ and  $\mathbb E_0(\lambda_t^{(k)}f_k^0)^2)$ and some exponential moments of $N^k( [0,B) )$ for $B>0$.
\begin{lemma}\label{lem:lambda2}
Assume that  Hypotheses \ref{cond:suprho}-\ref{cond:sparsity2} hold , then 
\begin{align*}
\mu_k^0&=E_0[\lambda_0^{(k)}(f_k^0)] \leq \frac{ c_1 ( 1 -c + R_1)}{ 1-c}  =: C_0 < \infty ; \quad 
\mathbb E_0[\lambda_0^{(k)}(f_k^0)^2] \leq C_0^2+\frac{  C_0 \bar h_0 R_\infty R_1 c }{ 1 -c} =: \bar C_0\\
 \mathbb E_0[e^{t N^k ( 0, B)}] & \leq e^{ t\gamma B } ,\quad  \forall 0\leq  t \leq t(c):= \frac{ \log (1+ c) - \log (2c)}{1 + \frac{ 2R_\infty }{1 - c}}, \quad \gamma  = \frac{ 2 C_0 R_1  }{ 1-c}
\end{align*}
\end{lemma}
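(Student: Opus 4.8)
The plan is to exploit the branching (cluster) representation of the linear Hawkes process, which expresses $N^k$ as a superposition of clusters where each event triggers offspring according to the interaction functions, with expected numbers governed by the matrix $\rho^0$. The stationary intensity $\mu_k^0 = \mathbb E_0[\lambda_0^{(k)}(f_k^0)]$ then satisfies the fixed-point relation $\mu^0 = \nu^0 + \rho^{0Tr}\mu^0$ (read componentwise, $\mu_k^0 = \nu_k^0 + \sum_\ell \rho_{\ell k}^0 \mu_\ell^0$), so $\mu^0 = \sum_{n\geq 0}(\rho^{0Tr})^n \nu^0$, which converges since $SpR(\rho^0)<1$. Taking $\|\cdot\|_\infty$ and using Hypothesis \ref{cond:suprho} for the bound $\Vvert(\rho^{0Tr})^n\Vvert_\infty = \Vvert(\rho^0)^n\Vvert_1 \leq R_1 c^n$ together with $\|\nu^0\|_\infty \leq c_1$ from Hypothesis \ref{cond:sparsity2} gives $\mu_k^0 \leq c_1(1 + \sum_{n\geq 1} R_1 c^n) = c_1(1 + R_1 c/(1-c)) = c_1(1-c+R_1 c)/(1-c)$; a minor rewriting of the constant yields the stated $C_0 = c_1(1-c+R_1)/(1-c)$ (indeed $R_1 c \le R_1$, so the claimed bound is valid, possibly slightly loose).

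For the second moment, I would write $\lambda_0^{(k)}(f_k^0) = \nu_k^0 + \sum_\ell \int_{-A}^0 h_{\ell k}^0(-s)\,dN^\ell(s)$ and compute $\mathbb E_0[(\lambda_0^{(k)})^2]$ by conditioning. The square expands into a "diagonal" term and an "off-diagonal" term. For the off-diagonal part one uses $\mathbb E_0[dN^\ell(s)\,dN^m(u)]$ for $s\neq u$, which by the martingale structure factors through $\mathbb E_0[\lambda^{(\ell)}\lambda^{(m)}]$-type quantities, ultimately reconstructing $(\mathbb E_0[\lambda_0^{(k)}])^2 = (\mu_k^0)^2 \leq C_0^2$. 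The diagonal contribution comes from the jump part: $\mathbb E_0\big[\sum_\ell \int_{-A}^0 h_{\ell k}^0(-s)^2\,dN^\ell(s)\big] = \sum_\ell \mu_\ell^0 \int_0^A h_{\ell k}^0(x)^2\,dx \leq \bar h_0 \sum_\ell \mu_\ell^0 \rho_{\ell k}^0$, using $h_{\ell k}^0 \leq \bar h_0$ so that $\int h^2 \le \bar h_0 \int h = \bar h_0 \rho_{\ell k}^0$. Bounding $\mu_\ell^0 \leq C_0$ and $\sum_\ell \rho_{\ell k}^0 = \Vvert\rho^0\Vvert_1$-type row sums — more precisely, one needs $\sum_\ell \mu_\ell^0 \rho_{\ell k}^0$, which after plugging $\mu^0 = \sum_n (\rho^{0Tr})^n\nu^0$ and using both parts of Hypothesis \ref{cond:suprho} gives a factor $R_\infty R_1 c/(1-c)$ — produces the claimed $\bar C_0 = C_0^2 + C_0\bar h_0 R_\infty R_1 c/(1-c)$. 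The bookkeeping here is the fiddly part, but it is standard second-order Hawkes computation (as in \cite{bacry-etal-2020}).

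For the exponential moment bound, the plan is again to use the cluster representation: $N^k(0,B)$ is at most the total size of all clusters rooted at events occurring in a suitable time window, and the cluster sizes are dominated by a multitype Galton–Watson process whose offspring means are the entries of $\rho^0$. One controls the moment generating function of the total progeny of such a branching process by a fixed-point/contraction argument: letting $\phi_\ell(t) = \mathbb E_0[e^{t\,|\text{cluster}_\ell|}]$, one has $\phi_\ell(t) \leq e^t \prod_m \mathbb E[\phi_m(t)^{\text{Poisson}(\rho_{m\ell}^0)}]$, and a Grönwall/iteration argument shows $\phi_\ell(t) \leq e^{\kappa t}$ for $t$ below a threshold where the contraction constant involves $2R_\infty/(1-c)$; combining with the Poisson arrivals (rate $\le c_1$, or rather the immigration intensities $\nu^0$) of cluster roots in $(0,B)$ and using $\mu^0$-type bounds yields $\mathbb E_0[e^{tN^k(0,B)}] \leq e^{t\gamma B}$ with $\gamma = 2C_0 R_1/(1-c)$ for $t \leq t(c)$.

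The main obstacle I anticipate is the exponential moment: making the branching-process MGF argument rigorous requires carefully identifying the correct offspring-mean matrix, verifying the contraction condition that produces exactly the threshold $t(c) = (\log(1+c) - \log(2c))/(1 + 2R_\infty/(1-c))$, and tracking how the per-cluster MGF bound assembles — via the random (Poisson) number of immigrant roots in $(0,B)$ and Campbell's formula — into the clean linear-in-$B$ exponent with constant $\gamma$. By contrast, the first two moment bounds are routine once the fixed-point identity $\mu^0 = \nu^0 + \rho^{0Tr}\mu^0$ and the second-order moment formulas for Hawkes processes are invoked.
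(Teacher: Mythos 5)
Your first--moment argument matches the paper's: both use the fixed--point identity $\mu^0 = (I-\rho^{0Tr})^{-1}\nu^0$, the Neumann series, and Hypotheses~\ref{cond:suprho}--\ref{cond:sparsity2}.

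For the second moment, there is a genuine gap. You claim the off--diagonal contribution to $\mathbb E_0\big[(\lambda_0^{(k)}(f_k^0))^2\big]$ ``ultimately reconstructs $(\mu_k^0)^2$,'' leaving only the Poisson diagonal term $\sum_\ell \mu_\ell^0\int_0^A (h^0_{\ell k})^2$. This is false for a Hawkes process: the increments $dN^{\ell_1}(s_1)$ and $dN^{\ell_2}(s_2)$ at distinct times are positively correlated, so $\mathbb E_0[dN^{\ell_1}(s_1)\,dN^{\ell_2}(s_2)]$ exceeds $\mu_{\ell_1}^0\mu_{\ell_2}^0\,ds_1\,ds_2$ by a non--negligible covariance density that must be controlled. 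The paper does exactly this: it invokes the cumulant density $\nu^{\ell_1\ell_2}$ from Bacry et al., writes $\nu = \diag(\mu^0)\,g$ where $g$ solves the renewal equation $g^{\ell_1\ell_2}(t)=h^0_{\ell_1\ell_2}(t)+\sum_j\int_0^A h^0_{\ell_1 j}(s)\,g^{j\ell_2}(t-s)\,ds$, and bounds $\Vvert g_\infty\Vvert_\infty \leq \bar h_0 R_\infty/(1-c)$ using the Neumann series. That operator bound is where the factor $R_\infty/(1-c)$ in $\bar C_0$ comes from, and it has no counterpart in your sketch. Citing Bacry et al.\ is the right instinct, but the key identity (the renewal equation for $g$ and its $\ell_\infty$--operator bound) is the content you would need to actually write down; as stated, your decomposition would underestimate $\bar C_0$.

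For the exponential moment, you take a genuinely different route from the paper. The paper applies inequality (3.16) of Theorem~3.7 in Leblanc (2025) as a black box, then bounds $\|(I - \tfrac{1+c}{2c}\rho^0)^{-1} t e_k\|_1 \leq 2tR_1/(1-c)$ using Hypothesis~\ref{cond:suprho}, which immediately gives both the threshold $t(c)$ and the exponent $\gamma = 2C_0 R_1/(1-c)$. You instead propose to rederive the bound from the cluster representation and a multitype Galton--Watson MGF fixed--point argument. That is plausible in spirit --- it is essentially how such inequalities are proved --- but you explicitly acknowledge you have not verified that your contraction argument produces the stated threshold $(\log(1+c)-\log(2c))/(1+2R_\infty/(1-c))$ or the linear-in-$B$ exponent with constant $\gamma$. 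Given that the paper has a clean citation that delivers exactly these constants, the direct branching--process route costs considerably more work for no gain here, and as written it is a sketch rather than a proof.
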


\begin{proof}[Proof of Lemma \ref{lem:lambda2}]
First write   $\mu^0 = (I - \rho^{0Tr})^{-1} \nu^0$, so that 
$
\mu_i^0 \leq \|\mu^0\|_\infty \leq \Vvert(I - \rho^{0Tr})^{-1}\Vvert_\infty \|\nu^0\|_\infty \leq c_1 \sum_{n=0}^\infty \Vvert (\rho^{0Tr})^n\Vvert_\infty \leq c_1 (1 + R_1 /(1-c))
$ which implies the first part of Lemma \ref{lem:lambda2}. We now bound the second moment of $\lambda_t^{(k)}(f_k^0)$. Since, 
$\lambda_0^{(k)}(f_k^0)^2 = [ \nu_k^0 + \sum_{\ell \in S_0(k)} \int_{-A}^{0^-}h_{\ell k}^0(-s)dN_s^\ell ]^2 $, 
and
$\nu_k^0+\sum_{\ell \in S_0(k)} \rho_{\ell k}^0\mu_\ell^0= \mu_k^0,$
\begin{align*}
\mathbb E_0[\lambda_0^{(k)}(f_k^0)^2 ] 
& =  (\mu_k^0)^2  + \sum_{\ell_1, \ell_2 \in S_0(k)}\int_{[-A,0)^2}h_{\ell_1 k}^0(-s_1)h_{\ell_2 k}^0(-s_2)\nu^{\ell_1, \ell_2}(s_1-s_2) ds_1ds_2\\
\end{align*}
where $\nu$ is defined and studied in \cite{bacry-etal-2020} 
 $$ \nu^{\ell_1, \ell_2}(s_1-s_2)ds_1ds_2 = \mathbb E_0[dN_{s_1}^{\ell_1}dN_{s_2} - \mu_{\ell_1}^0\mu_{\ell_2}^0ds_1ds_2 ] - \mu_{\ell_1}^0\1_{\ell_1=\ell_2}\delta_0(s_1-s_2)ds_1$$
 and verifies $\nu  = diag(\mu^0) g$ with $diag(\mu^0)$ the diagonal matrix defined by the vector $\mu^0$ and with
 $$  g^{\ell_1 \ell_2}(t)  = h_{\ell_1\ell_2}^0(t)+ \sum_{j}\int_{0}^{A}h_{\ell_1j}^0(s)g^{j \ell_2}(t-s)ds . $$
We define the matrix $g_\infty  = (\|g^{\ell_1 \ell_2}\|_\infty )_{\ell_1, \ell_2}$ (and similarly for $h^0_\infty$) and we  bound  (componentwise)
 $g_\infty \leq h^0_\infty  + \rho^0 g_\infty$ which in turns implies that componentwise
 $$g_\infty \leq \sum_{n=0}^n (\rho^0)^n  h^0_\infty \quad \text{and } \quad \Vvert g_\infty\Vvert_\infty \leq \frac{  \bar h_0 R_\infty }{ 1 -c}$$
 using Assumption \ref{cond:suprho}. Then 
\begin{align*}
\mathbb E_0[\lambda_0^k(f_0)^2 ] & \leq (\mu_k^0)^2  +\frac{  \bar h_0 R_\infty }{ 1 -c} \sum_{\ell_1, \ell_2} \mu_{\ell_1^0} \rho^0_{\ell_1 k}\rho^0_{\ell_2 k} \\
& \leq   C_0^2  +\frac{  \bar h_0 R_\infty C_0 }{ 1 -c} (\sum_{\ell}  \rho^0_{\ell k}) = C_0^2  +\frac{  \bar h_0 R_\infty C_0  \Vvert \rho^{0Tr} \Vvert  }{ 1 -c} 
C_0^2+\frac{  C_0 \bar h_0 R_\infty R_1 c }{ 1 -c}:= \bar C_0
\end{align*}
 To bound $\mathbb E_0(e^{t N^k [0,B)})$ we use inequality (3.16) of Theorem 3.7 of \cite{leblanc2025} which implies that for all $t < t(c)$
 \begin{align*}
     \mathbb E_0 \left[e^{t N^k [0,B)}\right] & \leq e^{ BC_0 \| \left( Id - \frac{1+c}{2c} \rho^0\right)^{-1}t e_k\|_1 }
 \end{align*}
 where $e_k\in \mathbb R^K$ is equal to $e_k(j)=\1_{j =k}$, $j \in [K]$. Bounding 
 $$\| \left( Id - \frac{1+c}{2c} \rho^0\right)^{-1}t e_k\|_1\leq t\sum_{n=0}^\infty \left( \frac{1+c}{2c}\right)^n \Vvert (\rho^0)^n \Vvert_1  \leq \frac{2 t R_1}{1-c} $$
 terminates the proof.
  \end{proof}

Assumption  \ref{cond:suprho} is an important assumption also to control the global behaviour of $N[a,b]$ for fixed intervals $[a,b]$ and is used in \cite{leblanc2025}.
If  the constants $R_1, R_\infty$ were allowed to depend on $K$, then this assumption would be equivalent to  $SpR(\rho^0)< 1$ with $c$ only slightly larger than $SpR(\rho^0)$. Here the additional assumption is on the dimension free constants $R_1, R_\infty$.

 In the  papers on high dimensional Hawkes processes (\cite{chen2019,bacry-etal-2020,cai2024latent,Tang-Li-2023}) the authors consider the stronger conditions:
 $ \Vvert (\rho^0) \Vvert_{\infty } <1$ and often $ \Vvert (\rho^0)\Vvert_{1 } <1 $ which imply Hypothesis  \ref{cond:suprho} with $R_1 = R_\infty =1$. Hypothesis \ref{cond:sparsity1} is the common sparsity condition stating that each dimension is directly impacted by  at most $s_0$ components.

\subsection{Bayesian inference} \label{sec:Bayes}
The Bayesian inference consists in defining a probability distribution $\Pi$, called the prior, on the set of parameters $\mathcal F^K$ with 
$$\mathcal F = \mathbb R_{+*} \times \mathcal H^K; \quad \mathcal H = \{ h : [0,A] \rightarrow \mathbb R_+, \int_0^A h(x) dx < 1  \},$$
where $\mathbb R_{+*} = ]0,\infty[$.

Both for practical and theoretical reasons we restrict to a prior in the form 
 \begin{equation}\label{prior:structure0}
     \Pi(df) = \prod_{k=1}^K \Pi_k(df_k),
 \end{equation} 
 so that the $f_k$'s are a priori independent and where $\Pi_k$ has support included in $\mathcal F$. 

 Then for all $B = B_1 \otimes \cdots \otimes B_K$ with $B_k \subset \mathcal F$,  the posterior distribution defined as the conditional distribution of $f$ given the observation $N$ has the form, see \cite{donnet18},
$$ \Pi ( B | N) = \prod_{k=1}^K\Pi_k ( B_k | N), \quad \Pi_k ( B_k | N)  = \frac{ \int_{B_k} e^{L_T^k(f^k)} d\pi_k(f_{k} ) }{ \int_{\mathcal F} e^{L_T^k(f^k)} d\pi_k(f_{k}) }. $$
Throughout the paper we denote the posterior expectation of a functional $G(f)$  by $E^{\pi}( G(f)|N)$. 
The product structure of the posterior distribution is crucial from a practical point of view since it allows for parallel computing where each posterior $\Pi_k ( \cdot | N) $ can be approximated independently. From a theoretical point of view it is also important since it allows to study each posterior separately and thus to be avoid the impact of $K^2$ and replace it with $K$. In particular we do not restrict the posterior to the set of stationary Hawkes processes which is 
 $ \mathcal F^K \cap \{ Sp(\rho(f)) < 1 \} $.

 For the sake of simplicity we assume that for each $k$ the prior $\Pi_k$ can be written as 
\begin{equation}\label{prior:structure}
 \Pi_k(d\nu_k, dh_{\ell k}, \ell \leq K) = \pi_\nu(\nu_k)d\nu_k \times \Pi_{k,h}( dh_{\ell k}, \ell \leq K).
 \end{equation}
In other words $\nu_k $ and $(h_{\ell k},\ell \leq K)$ are a priori independent and the prior on $\nu_k$ has a density with respect to Lebesgue measure. This hypothesis can easily be changed. 

To induce sparsity in the prior and because the graph of interactions is unknown a priori, we consider the following family of prior distributions $\Pi_{k,h}$ which we call the model selection priors. 
\begin{definition} [Model selection prior]
The prior $\Pi_{h,k}$ is a model selection prior if it has the following hierarchical structure
 \begin{align} \label{model:prior}
     S &\sim \Pi_S \quad \text{ over} \quad S\subset [K] \nonumber\\
     \forall \ell \in S, \quad h_{\ell k} &\stackrel{iid}{\sim} \Pi_h, \quad \text{and } \quad \forall \ell \in S^c ,\quad h_{\ell k}=0.
 \end{align} 
\end{definition}

The second part of the definition, i.e. $\forall \ell \in S, \quad h_{\ell k} \stackrel{iid}{\sim} \Pi_h $, is not crucial and corresponds in fact to the most complex prior, given the sparsity pattern $S$; since it does not make any structural form on $h_{\ell k}$ when $\ell \in S$. For instance  instead one could consider the following variant: $h_{\ell k} = \beta_{\ell k} h$, with $\beta_{\ell k}\in \mathbb R_+$  and $h$ is a function on $[0,A]$,
 \begin{align}\label{variant1}
     S &\sim \Pi_S \quad \text{ over} \quad S\subset [K], \quad h \sim \Pi_h \nonumber\\
     \forall \ell \in S, \quad \beta_{\ell k}  &\stackrel{iid}{\sim} \Pi_\beta, \quad \text{and } \quad \forall \ell \in S^c ,\quad \beta_{\ell k}=0.
 \end{align} 
 This prior imposes more structure on the parameter $f_k$ and is therefore more restrictive; but it may lead to faster rates of convergence. More discussion is provided in Section \ref{sec:priors}. 

A first step to understand the (asymptotic) behaviour of the posterior distribution is to derive posterior contraction rates for the parameters of interest, which are defined as positive sequences $u_T$ such that for some loss function $d(f,f')\geq 0$ 
\begin{equation}\label{post:contraction}
\mathbb E_0[\Pi( d(f, f_0)\leq u_T| N) ] = 1 + o(1).
\end{equation}
Then as in Theorem 2.5 of \cite{ghosal00}, one can typically  construct an estimator $\hat f$ based on the posterior distribution such that $d(\hat f, f_0)$ converges to 0 at the rate $u_T$ in probability. 
Moreover deriving posterior contraction rates as in \eqref{post:contraction} is a necessary first step for a finer analysis of the posterior uncertainty quantification based on credible regions, see for instance \cite{Hoffmann2013OnAP,Rousseau2016AsymptoticF}.

\subsection{General result on posterior contraction rates in $d_{1,T}$.}

In this section we use the theory developed in \cite{ghosal:vdv:07} and extend the results obtained in \cite{donnet18} for multivariate Hawkes processes with finite $K$ to the high dimensional setup on posterior contraction rates in  the empirical $L_1$ norm: 
 \begin{equation}\label{d1T}
  d_{1,T} (f_k, f_k^0) = \frac{ 1}{T} \int_0^T |\lambda_t^{(k)}(f_k) -\lambda_t^{(k)}(f_k^0)|dt .\end{equation}
  
\begin{theorem}\label{th:gene:d1T}
Consider  $N =(N^k, k\in [K])$ a stationary multivariate Hawkes process with true parameter $f^0$ which satisfies Hypotheses \ref{cond:suprho}, \ref{cond:sparsity1} and \ref{cond:sparsity2}. 

Consider a Prior $\Pi(df) =\prod_{k=1}^K \Pi_k (df_k) $ and such that for each $k$, $\Pi_k $  verifies \eqref{prior:structure} where the prior density  $\pi_\nu$ is positive and continuous on $\mathbb R_{+*}$ and   the prior $\Pi_{k,h}$ on $h_k = (h_{\ell k}, \ell \in [k])$ is a model selection prior and  verifies: There exists a sequence  $\e_T=o(1)$ and such that $T\e_T^2 \gtrsim (\log T)^3$ 
\begin{itemize}
\item (i) there exist   constants $\kappa, M_\infty, C_s>0$ such that 
 \begin{align*}
     \Pi_S( S=S_0(k) ) &\gtrsim e^{- C_s s_0\log K} \\
 \min_{\ell \in S_0(k)} \Pi_h\left( \| h_{\ell k } - h_{\ell k}^0\|_2 \leq \e_T ; \|h_{\ell k}\|_\infty \leq M_\infty  \right) &\gtrsim e^{ - \kappa Ts_0 \e_T^2 \log(\log T + s_0)- C_s s_0 \log K}
 \end{align*}
\item (ii) there exist $\mathcal H_T \subset \mathcal H$, $0<\delta\leq 1/C_0$  and $0<L_T \leq \delta \log T $ such that $0\in \mathcal H_T$ and for some $\zeta >0$ and $M_T $ going to infinity
 $$ \mathcal N( \zeta \e_T/\sqrt{L_T}, \mathcal H_T, \| \cdot \|_1) \leq T \e_T^2, \quad \Pi( \mathcal H_T^c) \leq  e^{ - M_T s_0^2 T \e_T^2 \log(\log T+ s_0)- M_T s_0 \log K } $$
where $\mathcal N(u,\mathcal H_T, \| \cdot \|_1) $ denotes the log-covering number of $\mathcal H_T$ by $L_1$ balls of radius $u$. 
 \item (iii)  For all $k$, there exists $M_T$ going to infinity such that 
 $$\Pi_S( |S| > L_T ) \lesssim  e^{ - M_T s_0^2 T \e_T^2 \log(\log T + s_0)- M_T s_0 \log K} $$
\end{itemize}

Then if $\log K = o(\sqrt{T}) $, $L_T>s_0$ verify 
\begin{equation}\label{eq:postrate:d1T}
v_T = \sqrt{ \log T} M_T' \left( (s_0 + \sqrt{L_T})\e_T+ \sqrt{ \frac{ s_0\log (K)}{T} }\right) = o(1)
\end{equation}
then 
$$E_0\left( \Pi_k( d_{1,T}( f_k, f_k^0) > v_T |N )  \right) = o(1) .$$
\end{theorem}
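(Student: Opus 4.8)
The plan is to apply the general posterior contraction theory of \cite{ghosal:vdv:07} separately to each coordinate posterior $\Pi_k(\cdot \mid N)$, and then control the sum of the failure probabilities over $k$ using $\log K = o(\sqrt T)$. Since the posterior factorizes as $\Pi(\cdot\mid N) = \prod_k \Pi_k(\cdot\mid N)$, it suffices to show that for each fixed $k$,
\[
\mathbb E_0\!\left(\Pi_k(d_{1,T}(f_k,f_k^0) > v_T \mid N)\right) = o(1),
\]
uniformly in $k$ (indeed with a bound summable against the eventual union over $k$, although the statement as written only asks for each $k$; the $\log K = o(\sqrt T)$ condition is what enters through the prior mass and entropy bounds). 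The three ingredients required by the Ghosal--van der Vaart machinery for point processes are: (a) a prior mass (Kullback--Leibler) condition, establishing that $\Pi_k$ charges a KL-type neighbourhood of $f_k^0$ with mass at least $e^{-C T \e_T'^2}$ for a suitable $\e_T'$; (b) the existence of sieves $\mathcal F_{T,k}$ with complement prior mass exponentially small relative to that rate; and (c) the existence of tests (entropy condition) separating $f_k^0$ from the $d_{1,T}$-complement of a $v_T$-ball, with exponentially small type-I and type-II errors on the sieve.

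\textbf{Step 1: prior mass / KL condition.} I would first translate the contraction-rate problem in the point-process likelihood into the geometry governed by $d_{1,T}$, using the standard fact (as in \cite{donnet18}) that for bounded intensities the KL divergence and KL variation between $\mathbb P_{f_k^0}$ and $\mathbb P_{f_k}$ are controlled by $\mathbb E_0[d_{1,T}(f_k,f_k^0)^2]$ up to constants depending on the lower bound $\nu_k^0 \geq c_0 > 0$ (which keeps $\log(\lambda^0/\lambda)$ under control) and on the moment bounds of Lemma~\ref{lem:lambda2}. On the event $S = S_0(k)$, the intensity difference $\lambda_t^{(k)}(f_k) - \lambda_t^{(k)}(f_k^0)$ is a sum of $|S_0(k)| \leq s_0$ terms of the form $\nu_k - \nu_k^0$ plus $\int (h_{\ell k} - h_{\ell k}^0)(t-s)\,dN^\ell(s)$; bounding its $L_2(\mathbb P_0)$ norm by $C(|\nu_k - \nu_k^0| + \sqrt{s_0}\sum_{\ell \in S_0(k)} \|h_{\ell k} - h_{\ell k}^0\|_2)$ via Cauchy--Schwarz and the second-moment bound $\bar C_0$, one sees that if $\nu_k$ is within $\e_T$ of $\nu_k^0$ (cost $\gtrsim e^{-C\log T}$ from positivity and continuity of $\pi_\nu$) and each $h_{\ell k}$ is within $\e_T$ of $h_{\ell k}^0$ in $L_2$ with $\|h_{\ell k}\|_\infty \leq M_\infty$, then $d_{1,T}$ is of order $(s_0 + \sqrt{s_0 \cdot s_0})\e_T \asymp s_0 \e_T$. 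Multiplying the probabilities from hypothesis (i) over the $s_0$ coordinates gives prior mass at least $\exp(-\kappa T s_0^2 \e_T^2 \log(\log T + s_0) - C_s s_0 \log K)$ of a neighbourhood on which $d_{1,T}$, and the KL quantities, are $O(s_0\e_T)$; hence the effective KL-rate is $\e_T' \asymp s_0\e_T + \sqrt{s_0\log K / T}$ (up to $\log$ factors), which is the first two terms in \eqref{eq:postrate:d1T}.

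\textbf{Step 2: sieve and entropy.} For the sieve I would take $\mathcal F_{T,k} = \{f_k : \nu_k \leq \bar\nu_T,\ |S| \leq L_T,\ h_{\ell k} \in \mathcal H_T\ \forall \ell\}$ with $\bar\nu_T$ a slowly growing bound. Hypotheses (ii) and (iii) are precisely what control $\Pi_k(\mathcal F_{T,k}^c)$: the $|S| > L_T$ part is handled by (iii), the $\mathcal H_T^c$ part by (ii) (after a union bound over the at most $L_T$ active coordinates, absorbed into the $s_0^2$ and $M_T$ slack), and the $\nu_k$ tail by continuity of $\pi_\nu$ — all exponentially smaller than the Step-1 prior mass. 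For the entropy/testing condition, I would cover $\mathcal F_{T,k}$ by first choosing one of the $\binom{K}{L_T} \leq e^{L_T \log K}$ possible supports $S$ and then covering each of the $|S| \leq L_T$ copies of $\mathcal H_T$ and the interval $(0,\bar\nu_T]$. The key quantitative step is the comparison between the $d_{1,T}$ metric and the product $L_1$ metric on the $h_{\ell k}$'s: using the "local" bound $|\lambda_t^{(k)}(f_k) - \lambda_t^{(k)}(g_k)| \leq |\nu_k - \mu_k| + \sum_\ell \int |h_{\ell k} - g_{\ell k}|(t-s)\,dN^\ell(s)$ and integrating, one gets $d_{1,T}(f_k,g_k) \lesssim |\nu_k-\mu_k| + \sum_{\ell\in S} \|h_{\ell k}-g_{\ell k}\|_1 \cdot \big(\text{avg. of } N^\ell \text{ over windows of length } A\big)$; on a high-probability event (using the exponential-moment bound $\mathbb E_0[e^{tN^k[0,B)}] \leq e^{t\gamma B}$ from Lemma~\ref{lem:lambda2}) these averages are $O(\log T)$, hence the factor $\sqrt{L_T}$ and the $\sqrt{\log T}$ in $v_T$. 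Thus an $L_1$-net of $\mathcal H_T$ at scale $\zeta \e_T/\sqrt{L_T}$ (log-size $\leq T\e_T^2$ by (ii)) yields a $d_{1,T}$-net of $\mathcal F_{T,k}$ at scale $\asymp \sqrt{L_T}\,(\zeta\e_T/\sqrt{L_T})\cdot\sqrt{\log T} = \zeta'\e_T\sqrt{\log T}$, with log-covering number $\lesssim L_T \cdot T\e_T^2 + L_T\log K + \log(\bar\nu_T/\e_T) \lesssim T v_T^2$. Standard point-process tests (as constructed in \cite{donnet18} via the martingale structure of $L_{T,k}$, again invoking the exponential moments to exclude the event that the process is atypically large) then give the required test with errors $e^{-cTv_T^2}$.

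\textbf{Step 3: assembling.} Combining Steps 1--3 via the point-process version of Theorem~2.1 of \cite{ghosal:vdv:07} (as already adapted to Hawkes processes in \cite{donnet18}), one gets $\mathbb E_0(\Pi_k(d_{1,T}(f_k,f_k^0) > M v_T \mid N)) = o(1)$ for a large constant $M$, which after rescaling $v_T$ is the claim. The one place demanding genuine care — the main obstacle — is \textbf{Step 2's control of the random metric $d_{1,T}$ uniformly over the high-dimensional sieve}: unlike the $d_{2,T}$ setting or the fixed-$K$ case in \cite{donnet18}, here the net must be built coordinatewise and then the window-averages $\frac1T\int_0^T(\text{stuff})\,dN^\ell$ controlled simultaneously for the $L_T$ active coordinates, uniformly over the net; this is exactly where Hypothesis~\ref{cond:suprho} (dimension-free $R_1,R_\infty$) and the Le Blanc-type exponential inequality of Lemma~\ref{lem:lambda2} are indispensable, and where the $\sqrt{\log T}$, $\sqrt{L_T}$ and the $s_0^2\log(\log T+s_0)$ slack in the hypotheses originate. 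The condition $\log K = o(\sqrt T)$ enters to ensure that $T v_T^2 \to \infty$ even after the $s_0\log K$ terms are included, so that all the exponential bounds above are genuinely decaying.
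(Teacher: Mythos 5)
Your high-level strategy — a per-coordinate Ghosal--van der Vaart argument with a Kullback--Leibler prior-mass bound, a sieve controlled by hypotheses (ii)--(iii), and tests built from an $L_1$-covering of the sieve — is indeed the paper's route, and you correctly identify the role of Lemma~\ref{lem:lambda2} and of Hypothesis~\ref{cond:suprho} in keeping constants dimension-free. However, two quantitative steps are misattributed in a way that would undermine a complete write-up.

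First, the $\sqrt{\log T}$ in $v_T$ does \emph{not} come from window-averages of $N^\ell$ being $O(\log T)$. On the event $\Omega_T$ the relevant quantity $\frac{1}{T}N^\ell[-A,T]$ is $O(1)$ (it concentrates around $\mu_\ell^0 \leq C_0$), and that is exactly what is used to compare $d_{1,T}$ with $\|\cdot\|_1$ on the sieve: $d_{1,T}(f_k, g_k) \lesssim (C_0 + \delta_T)\|f_k - g_k\|_1$. The $\log T$-type factor in the Kullback--Leibler condition instead comes from the truncation argument inside the KL bound (the choice of $r \asymp T^{-2}$ so that $\log(r^{-1}) \asymp \log T$, combined with bounding $\max_{\ell\in S_0(k)}\sup_t N^\ell[t-A,t)$ by $C\log T$ via the exponential moment of Lemma~\ref{lem:lambda2}). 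Similarly, the $\sqrt{L_T}$ does not arise from "averaging the $L_T$ active coordinates": the tests are constructed on slices $S_j(k)=\{d_{1,T}(f_k,f_k^0)\in(j\e_T,(j+1)\e_T)\}$ by covering with $\|\cdot\|_1$-balls of radius $\zeta j\e_T$, and the scale $\zeta\e_T/\sqrt{L_T}$ in assumption (ii) is exactly what is needed so that, for $|S|\leq L_T$ and $j\geq\sqrt{L_T}$, the constraint $\max_{\ell\in S}\|h^1_{\ell k}-h^2_{\ell k}\|_1\leq j\zeta\e_T/(2|S|)$ is covered by the prescribed net; the posterior can only be excluded at radii $j\geq J_0\sqrt{L_T}\max(1,\sqrt{\log(K/L_T)/(T\e_T^2)})$, which is what produces the $\sqrt{L_T}\e_T$ and $\sqrt{s_0\log K/T}$ pieces of $v_T$.

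Second, your $d_{1,T}$-net bookkeeping in Step~2 ($\sqrt{L_T}\cdot(\zeta\e_T/\sqrt{L_T})\cdot\sqrt{\log T}$) is internally inconsistent: with $|S|\leq L_T$ coordinates each within $\zeta\e_T/\sqrt{L_T}$ in $L_1$, the $\|\cdot\|_1$-distance is $\lesssim \zeta\e_T\sqrt{L_T}$ and hence $d_{1,T}\lesssim C_0\zeta\e_T\sqrt{L_T}$ on $\Omega_T$, without any $\sqrt{\log T}$. If you rebuild the argument with the correct $O(1)$ comparison constant between $d_{1,T}$ and $\|\cdot\|_1$ on $\Omega_T$, and locate the $\log T$ factor inside the KL bound rather than inside the entropy, the proof lines up with the paper's.
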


\begin{proof}[Proof of Theorem \ref{th:gene:d1T}]
The proof of Theorem \ref{th:gene:d1T} is an adaptation of that of Theorem 1 in \cite{donnet18} to control the dependence on $K$ which is exponential in  \cite{donnet18}. 
if $f_k \in B_2(f_k^0; \e_T) \cap \{ |\nu_k - \nu_k^0|\leq \e_T^2\}$ where 
$ B_2(f_k^0; \epsilon) = \{h_k; \max_{\ell \in S_0(k)}\| h_{\ell k } - h_{\ell k}^0\|_2 \leq \epsilon ;max_{\ell \in S_0(k)}\| h_{\ell k } \|_\infty \leq M_\infty;  \,\sum_{\ell \notin S_0(k)}\rho_{\ell k } \leq \epsilon^2 \} ,\quad \epsilon>0 $
then we bound with  $\Psi(u):=-\log(u)-1+u\geq 0$, for $u>0$ 
\begin{align}\label{KL1}
KL_T(f_k^0,f_k ) &=  \mathbb E_0[L_{T, \ell}^{(k)}(f_k^0)-L_{T, \ell}^{(k)} (f_k)]  = T\mathbb E_0\left[\Psi\left(\frac{\lambda_0^{(k)}(f_k)}{\lambda_0^{(k)}(f_k^0)}\right)\lambda_0^{(k)}(f_k^0)\right]\nonumber \\
& \leq  T\mathbb E_0\left[\Psi\left(\frac{\lambda_{0}^{(k)}(f_{A_0}) }{\lambda_0^{(k)}(f_k^0)}\right)\lambda_0^{(k)}(f_k^0) \right]+ T \sum_{\ell\notin S_0(k)^c} \mu_\ell^0\rho_{\ell, k}
\end{align}
where we have used the fact that $\Psi(u+v)\leq \Psi(u)+v$ when $u,v\geq0$ and 
$\lambda_{0}^k(f_{A_0})  = \nu_k + \sum_{\ell\in S_0(k)} \int_{-A}^{0-} h_{\ell, k}(-u)dN^\ell_u.$
We bound the first term of the right handside of \eqref{KL1} similarly  to Lemma 2 of \cite{donnet18} and the second term is bounded by definition of $ B_2(f_k^0; \e_T)$. To use the computations of \cite{donnet18}, we need only to control $\max_{\ell \in S_0(k)} \sup_{[0,T]} N^\ell([t-A, t))$. This is done by bounding, using Lemma \ref{lem:lambda2},
 \begin{align*} 
 \mathbb P_0 \left( \max_{\ell \in S_0(k)} \sup_{[0,T]} N^\ell([t-A, t)) \leq C \log T\right) &\leq s_0 T \max_{\ell} \mathbb P_0 \left(  N^\ell([0,2A)) > C \log T \right) \\
 & \leq  s_0 Te^{-  t(c) C\log T/4} \leq T^{-\alpha}
 \end{align*}
 as soon as $t(c)C >8(\alpha +1) $.
 We then obtain that 
 \begin{equation}\label{KL2}
     KL_T(f_k^0,f_k )\leq T\e_T^2  \kappa_0s_0^2 \log (\log T+ s_0)  
 \end{equation}
 where $\kappa_0$ is a constant depending on $c_1, c_0, \bar C_0, \bar h_0$. Note that we have in fact used a weaker condition on the prior than the model selection prior, since instead of restricting to $\sum_{\ell \in S_0(k)^c}\rho_{\ell k}=0$ (i.e. $S= S_0(k)$) we only need lower bound
 $\Pi(\sum_{\ell \in S_0(k)^c}\rho_{\ell k}\leq \e_T^2) $. 

Let $\delta_T= o(1)$ and define
\begin{equation}\label{OmegaT}
\Omega_{T}  =  \cap_{k\in [K]} \max \left( \left|\frac{N^k(-A, T] }{ T}  - \mu_k^0\right|,  \left|\frac{N^k[0, T] }{ T}  - \mu_k^0\right|,  \left|\frac{N^k(0, T-A] }{ T}  - \mu_k^0\right| \leq \delta_T\right)
\end{equation}
 Then  by Lemma \ref{lem:devNT}, $\mathbb P_0 (\Omega_T^c) = o(1)$.
  Then as in the proof of Theorem 1 of \cite{donnet18}, choosing $z_T =o(M_T)$ going to infinity , for any $B_k \subset \mathcal F^k$v which may depend on $N$,
\begin{equation} \label{gene:bound}
\begin{split}
\mathbb E_0( \Pi_k ( B_k | N)\1_{\Omega_{T}}) &  \leq \mathbb E_0(\phi_T) + \frac{ 1 }{z_T}  \\ 
 & \quad + e^{2z_T s_0^2 T\e_T^2\log (\log T+ s_0) +  z_Ts_0 \log K} \left[\int_{\mathcal F_T} \mathbb E_{\tilde f_{[k]}}(\1_{B_k}(1 -\phi_T)\1_{\Omega_{T}} )d\pi_k(f_{k} ) + \Pi( \mathcal T_F^c)\right].
\end{split}
\end{equation}
 and where $\tilde f_{[k]} = (f_{1}^0,\cdots,   f_{k}, \cdots , f_{K}^0)$. In other words the components $\tilde f_{k'} = f_{k'}^0$ for $k' \neq k$ and $\tilde f_{k} = f_{k}$ and $\phi_T $ are test functions in $[0,1]$. 
 Let $\mathcal F_T = \cup_{S\subset [K], |S|\leq L_T} \mathcal F_T(S) \subset \mathcal F,$ with
\begin{equation*}
\mathcal F_T(S) = \{f_k;   \forall \ell \in S,  \, h_{\ell k} \in \mathcal H_T, \, \forall \ell \notin  S, h_{\ell k}=0\}, 
\end{equation*}
 then under assumptions (ii) and  (iii) of Theorem \ref{th:gene:d1T} 
 \begin{align}\label{piFTc}
 \Pi( \mathcal F_T^c) 
 & \leq  \Pi_S( |S|> L_T)  + \Pi( \mathcal H_T^c)L_T \nonumber \\
 &\leq   e^{ -M_T s_0^2T \epsilon_T^2 \log T - } ( 1 + L_T) \leq  e^{ -M_Ts_0^2 T \epsilon_T^2 \log( \log T+s_0)/2} .
 \end{align}
The tests $\phi_T$ are constructed similarly to \cite{donnet18} but the control of the errors analysis is refined in terms of the dependence on $K$. Let $j \geq 1$,  on $S_{j}(k) = \{ f_k; d_{1,T}(f_k, f_k^0) \in (j \e_T, (j+1)\e_T)\}$, then $ \int_0^T \lambda_t^{(k)}(f_k) dt \leq  \int_0^T \lambda_t^{(k)}(f_k^0) dt + T(j+1)\e_T$  so that on $\Omega_T$, defined in \eqref{OmegaT},
   $$ \nu_k + \sum_{\ell \in [K]} \rho_{\ell,k} \frac{N^\ell[0, T-A]}{T} \leq \mu_k^0 + (j+1)\e_T + \delta_T/c_0 $$
and if $f_k\in S_j(k)$,  $f_k \in \mathcal F_j(k)$ with 
$$ \mathcal F_j(k) = \{ f_k \in \mathcal F_T; \nu_k \leq \mu_k^0 + \delta_T/c_0+(j+1)\e_T\}.$$
Consider a covering of $\mathcal F_j(k)$, $(f_{i,k})_{i\leq \mathcal N_j}$ with $\| \cdot \|_1$ balls of radius $\zeta j \epsilon_T$  and $\phi_j = \max_{i\leq \mathcal N_j} \phi_{f_{i,k}}$, with $\phi_{f_{i,k}}$ defined in Lemma 1 of \cite{donnet18} then there exists $x_1 $ depending in $C_0, c_0$  such that 
$$ \mathbb E_{0} ( \phi_j \1_{\Omega_{1,T}} ) \leq \mathcal N_j e^{- x_1 j T\e_T \min(j\e_T,1)  } $$
and for all $f_k \in   \mathcal F_j(k)$, 
\begin{equation} \label{type2error}
\mathbb E_{\tilde f} ( (1-\phi_j) \1_{\Omega_{T}}\1_{f_k\in S_j} )  \leq e^{- x_1 jT\e_T \min (j \e_T, 1) } \end{equation}
We now bound $\mathcal N_j$. Let $S\subset [K]$ such that $|S|\leq  L_T$, 
$\mathcal F_j(S) = \{ f_k \in \mathcal F_j \cap \mathcal F_T(S) \} $, by definition $\mathcal F_j = \cup_{|S|\leq L_T} \mathcal F_j(S)$. Consider $\zeta >0$  and  $f_k^1 , f_k^2 \in\mathcal F_j(S)$  such that $  \max_{\ell \in S}\|h_{\ell k}^1 -h_{\ell k}^2\|_1\leq (j\zeta \e_T)/(2|S|)$ then 
\begin{align*}
\|f_k^1 -  f_k^2 \|_1 
& \leq  \sum_{\ell \in S}\|h_{\ell,k}^1 -h_{\ell k}^2\|_1  \leq  \zeta \e_Tj 
\end{align*} 
 so that choosing $j \geq \sqrt{L_T}$, 
$\mathcal N_j(S)  \leq  \mathcal N( \zeta\e_T/\sqrt{L_T}, \mathcal H_T, \|\cdot \|_1)^{|S|}\leq e^{T \e_T^2 L_T }$ and 
\begin{align*}
\mathcal N_j&\leq \sum_{s=0}^{L_T}\frac{ K! }{ s! (K-s)!}  e^{T \e_T^2 s} \lesssim e^{T \e_T^2 s_1}K^{s_1}+ \sum_{s=s_1}^{L_T}\frac{ 1 }{ \sqrt{s} } e^{  s\log( K/s) } e^{T \e_T^2s }\\
& \lesssim L_Te^{ L_T [T\e_T^2 +\log (K/L_T)]}
\end{align*}
Therefore, since $\log K = o(\sqrt{T})$   for all $j>J_0\sqrt{ L_T}max( 1, \sqrt{\log (K/L_T)/(T \e_T^2)})  $ with $J_0$ large enough
$\mathbb E_0(\phi_j) \lesssim  e^{ - x_1 j T\e_T ( j\e_T, 1)/2}$
which, combined with \eqref{gene:bound} and \eqref{type2error}  terminates the proof of Theorem \ref{th:gene:d1T}.
\end{proof}

\begin{remark}
    Condition (i) in Theorem \ref{th:gene:d1T} has two components: the term $O(s_0 \log K)$ which reflects the difficulty to select the set of active indexes $S_0(k)$ under the prior, which can call the combinatorial complexity and the term $ T\e_T^2 \log(\log T + s_0)$ which reflects typically the capacity of the prior on each $h_{\ell k}$ to approximate $h_{\ell k}^0$. When $\log K$ is larger than $T\e_T^2 $ then the combinatorial complexity dominates the rate, otherwise it is the \textit{nonparametric} aspect of $h_{\ell k}$ which dominates the rate, at least as far as the Kullback-Leibler condition  is concerned. 
\end{remark}
\begin{remark}\label{Uniforprior}
    A very common model selection prior consists in defining $\Pi_S$ by first selecting $|S| \sim \Pi_{1,S}$ and then given $|S| = s$ considering a uniform prior on $S$. The latter has probability mass function 
 $$\Pi_S( S | |S|=s )  =\frac{ s! (K-s)! }{ K!} \asymp \frac{(K/s)^s}{ \sqrt{s} }, \quad 1 < s = o(K).$$
Therefore such a family of priors verify the first part of assumption (i) as soon as $\Pi_1( s  = s_0 ) \geq e^{-C s_0 \log K}$ for some $C>0$, which is a very weak assumption.
\end{remark}
The rate $s_0 \e_T$ (up to $\log T$ terms) can be interpreted as the posterior contraction rate which would be obtained if the the active set of interactions $S_0$ were known. Indeed, in this case  the priors $ \Pi_k  $ on $f_k$ are such that   $(h_{\ell k},\ell \notin S_0(k)) = 0$ and only a prior on $(h_{\ell k},\ell \in S_0(k)) $ needs to be defined.  


\begin{proposition}\label{oracle}
Consider the  Hypotheses \ref{cond:suprho}, \ref{cond:sparsity1} and \ref{cond:sparsity2} and that $S_0$ is known. Assume  that the prior density  $\pi_\nu$ is positive and continuous on $\mathbb R_{+*}$  and  that the prior on $h_k = (h_{\ell k}, \ell\in  S_0(k))$ verifies: There exists a sequence  $\e_T=o(1)$ and such that $Ts_0^2\e_T^2 \gtrsim (\log T)^3$ 
\begin{itemize}
\item (i) there exist constants $\kappa, M_\infty>0$ such that 
 $$\min_{\ell\in S_0(k)}\Pi\left( \| h_{\ell k } - h_{\ell k}^0\|_2 \leq \e_T ; \,  \| h_{\ell k}\|_\infty \leq M_\infty\right) \gtrsim e^{ - \kappa T s_0\e_T^2 \log T},$$
\item (ii) there exist $\mathcal H_T \subset \mathcal H$ and $M_T$ going to infinity such that for some $\zeta >0$
 $$ \mathcal N( \zeta \e_T/s_0, \mathcal H_T, \| \cdot \|_1) \leq T \e_T^2, \quad \Pi( \mathcal H_T^c) \leq  e^{ - M_T Ts_0^2 \e_T^2 \log T}$$
\end{itemize}
Then if $\log K = o( \sqrt{T})$ and $s_0 \e_T \log( \log T+ s_0)= o(1)$ , for all $M_T'$ going to infinity,  
\begin{equation*}
E_0\left( \Pi_k( d_{1,T}(f_k,f_k^0) \geq  M_T' s_0 \e_T (\log T)^{1/2} |N ) \right) = o(1) .
\end{equation*}
\end{proposition}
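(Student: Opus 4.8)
The plan is to rerun the proof of Theorem \ref{th:gene:d1T} in the degenerate case where the model selection prior is the point mass $\Pi_S=\delta_{S_0(k)}$: this is exactly what ``$S_0$ known'' means, namely that $\Pi_{k,h}$ charges only configurations with $h_{\ell k}=0$ for $\ell\notin S_0(k)$ and, conditionally, $(h_{\ell k})_{\ell\in S_0(k)}\stackrel{iid}{\sim}\Pi_h$. With this choice several pieces of the proof of Theorem \ref{th:gene:d1T} trivialise: condition (iii) holds with $L_T=s_0$ since $\Pi_S(|S|>s_0)=0$; the combinatorial factor $e^{-C_s s_0\log K}$ in condition (i) equals $1$; and in the entropy step there is only the model $S_0(k)$, so the $\binom Ks$ sum disappears. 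Hence every $\log K$ and every combinatorial term drops out, $\sqrt{L_T}\asymp\sqrt{s_0}\lesssim s_0$, and the rate \eqref{eq:postrate:d1T} collapses to $v_T\asymp\sqrt{\log T}\,M_T'\,s_0\e_T$. Because the $s_0$ active coordinates now play the role that a single coordinate played in Theorem \ref{th:gene:d1T}, the normalisation $T\e_T^2\gtrsim(\log T)^3$ there is replaced by its natural $s_0^2$-weighted analogue $Ts_0^2\e_T^2\gtrsim(\log T)^3$, which is the hypothesis made here.

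First I would redo the Kullback--Leibler step. On $\Omega_T$ of \eqref{OmegaT} and on $\{\max_{\ell\in S_0(k)}\sup_{t\le T}N^\ell[t-A,t)\lesssim\log T\}$, whose complement has $\mathbb P_0$-probability $o(1)$ by Lemma \ref{lem:lambda2} exactly as in the proof of Theorem \ref{th:gene:d1T}, the estimate \eqref{KL1}--\eqref{KL2} applies verbatim (the term $T\sum_{\ell\notin S_0(k)}\mu_\ell^0\rho_{\ell k}$ vanishing since $h_{\ell k}=0$ off $S_0(k)$): for $f_k$ with $\max_{\ell\in S_0(k)}\|h_{\ell k}-h_{\ell k}^0\|_2\le\e_T$, $\max_\ell\|h_{\ell k}\|_\infty\le M_\infty$ and $|\nu_k-\nu_k^0|\le\e_T^2$ one gets $KL_T(f_k^0,f_k)\lesssim Ts_0^2\e_T^2\log(\log T+s_0)$. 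Combining this with condition (i) of the proposition applied to each of the $s_0$ independent coordinates, and the positivity and continuity of $\pi_\nu$, yields the prior-mass lower bound $\Pi_k\big(B_2(f_k^0;\e_T)\cap\{|\nu_k-\nu_k^0|\le\e_T^2\}\big)\gtrsim e^{-c_2 Ts_0^2\e_T^2\log T}$, with $B_2(f_k^0;\e_T)$ as in the proof of Theorem \ref{th:gene:d1T}. Also $\mathbb P_0(\Omega_T^c)=o(1)$ by Lemma \ref{lem:devNT}, as before.

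Next I would reproduce the testing step shell by shell. On $S_j(k)=\{f_k: d_{1,T}(f_k,f_k^0)\in(j\e_T,(j+1)\e_T)\}$ one has, on $\Omega_T$, $f_k\in\mathcal F_j(k)\cap\mathcal F_T(S_0(k))$; covering this set with $\|\cdot\|_1$-balls of radius $\asymp j\e_T$ reduces, since $|S_0(k)|=s_0$, to covering each of the $s_0$ coordinates at scale $\asymp\e_T/s_0$, so by condition (ii) of the proposition the log-covering number is $\le s_0\,\mathcal N(\zeta\e_T/s_0,\mathcal H_T,\|\cdot\|_1)\le s_0 T\e_T^2$, with \emph{no} combinatorial factor. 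Taking $\phi_j$ the maximum over this covering of the elementary tests of Lemma~1 of \cite{donnet18} gives, as in \eqref{type2error}, $\mathbb E_0(\phi_j\1_{\Omega_T})\lesssim e^{s_0 T\e_T^2-x_1 jT\e_T\min(j\e_T,1)}$ and, for every $f_k\in\mathcal F_j(k)$, $\mathbb E_{\tilde f}\big((1-\phi_j)\1_{\Omega_T}\1_{f_k\in S_j}\big)\lesssim e^{-x_1 jT\e_T\min(j\e_T,1)}$. I would then feed the prior-mass bound, the entropy bound, and the tail bound $\Pi(\mathcal H_T^c)\le e^{-M_T Ts_0^2\e_T^2\log T}$ into the general inequality \eqref{gene:bound} (now without the $z_T s_0\log K$ term), choosing $z_T\to\infty$ slowly enough that $z_T=o(M_T)$ and $\sqrt{z_T}=o(M_T')$: the prior-mass deficit then contributes a factor $e^{c_3 z_T Ts_0^2\e_T^2\log T}$, which is dominated by $\mathbb E_0(\phi_j)$ as soon as $x_1 j^2\gtrsim z_T s_0^2\log T$, i.e. for all $j\gtrsim s_0\sqrt{z_T\log T}$. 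Taking $B_k=\{d_{1,T}(f_k,f_k^0)\ge v_T\}$ with $v_T=M_T' s_0\e_T(\log T)^{1/2}$ and summing \eqref{gene:bound} over the shells $j$ with $j\e_T\le v_T$ — the conditions $s_0\e_T\log(\log T+s_0)=o(1)$ and $Ts_0^2\e_T^2\gtrsim(\log T)^3$ serving, as in Theorem \ref{th:gene:d1T}, to keep the KL exponent below the prior-mass exponent and to control the residual error terms — gives $\mathbb E_0(\Pi_k(B_k\mid N)\1_{\Omega_T})=o(1)$, and together with $\mathbb P_0(\Omega_T^c)=o(1)$ this is the claim.

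The only genuinely delicate point is the bookkeeping in the testing step: one must verify that, with $\Pi_S$ degenerate at $S_0(k)$, the union of shells carries entropy \emph{linear} in $s_0$ (namely $\lesssim s_0 T\e_T^2$, not $s_0^2 T\e_T^2$), so that the threshold $j$ at which the tests overcome the prior-mass deficit is of order $s_0\sqrt{\log T}$ and the resulting rate is exactly $M_T' s_0\e_T(\log T)^{1/2}$ rather than larger. Everything else follows from the corresponding estimates in the proof of Theorem \ref{th:gene:d1T} by setting all $\log K$ and combinatorial contributions to zero.
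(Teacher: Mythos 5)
The paper does not give a separate proof of Proposition~\ref{oracle}; it is presented as the degenerate case of Theorem~\ref{th:gene:d1T} in which $\Pi_S$ is the point mass at $S_0(k)$, so $L_T = s_0$ and all combinatorial and $\log K$ contributions vanish. Your reconstruction is exactly this specialization, and the bookkeeping (KL and prior-mass exponents $\asymp Ts_0^2\e_T^2\log T$, entropy $\lesssim s_0 T\e_T^2$, testing threshold $j\gtrsim s_0\sqrt{\log T}$) is carried out correctly.
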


Hence, up to $\log T$ terms the rate $s_0\e_T$ can be seen as the rate that would be obtained if the connectivity graph was known;  and $\e_T$ is determined by the ability of the prior on $h_{\ell k} $ to approximate $h_{\ell k}^0$, see Section \ref{sec:priors} for illustrations of this. In parametric models, one typically has for all $H$
$$ \Pi\left( \max_{\ell \in S_0(k)}\| h_{\ell k } - h_{\ell k}^0\|_2 \leq \ T^{-H} ; \, \max_{\ell\in S_0(k)} \| h_{\ell k}\|_\infty \leq M_\infty\right) \gtrsim e^{-s_0 \kappa_1\log T}$$
for some $\kappa_1>0$ so that, $\e_T =O( \log T/ \sqrt{T} )$ but more importantly the $d_{1,T} $ posterior contraction rate is $O\left( \frac{\sqrt{s_0 }\log T}{\sqrt{T}}\right) $ instead of $O\left( \frac{ s_0 \log T}{\sqrt{T}}\right) $  and $s_0$ is allowed to grow like $o( T/(\log T)^2)$   when $S_0$ is known and similarly, when $S_0$ is not known, the rate is $O( \sqrt{L_T \log K/T} )$ instead of, 
 $(s_0 + \sqrt{L_T \log K} )/\sqrt{T}$. 


In the following section we derive a posterior contraction rate in the direct $L_1$ norm.

\subsection{ General result on posterior concentration rates under the $L_1$ loss.}\label{sec:mainL1:th}
In this section we study the \textit{direct} loss function 
 $$ \|f_k - f_k^0 \|_1 = |\nu_k - \nu_k^0| + \sum_{\ell \in [K] } \|h_{\ell k} - h_{\ell k}^0\|_1.$$

\begin{theorem}\label{th:gene:L1}
Consider  $N =(N^k, k\in [K])$ a stationary multivariate Hawkes process with parameter $f^0$ which satisfies Hypotheses \ref{cond:suprho}, \ref{cond:sparsity1} and \ref{cond:sparsity2}. 

Consider a prior whose structure follows \eqref{prior:structure} and such that  $\pi_\nu$ is positibe and continuous and a model selection prior on $h_k$. Let $k\in [K]$, assume  that the prior on $h_k = (h_{\ell k}, \ell \in [K])$ verifies (i)-(iii) of \cref{th:gene:d1T}. 
Then if $\log K = (\sqrt{T})$ and  $\log K = O ( \min( e^{-6AC_0L_T} \e_T^{-1}/\log T, T^{1/3}e^{-4AC_0L_T}) $, if   $s_0\leq L_T \leq \delta \log T$ with $\delta < 1/(6AC_0)$ and 
$e^{6 AC_0 L_T} v_T (\log T+ \log K) = o(1)$, then for some  $M>0$ going to infinity
\begin{equation}\label{eq:postrate:L1}
E_0\left( \Pi_k\left( \|f_k - f_k^0 \|_1> M e^{6 AC_0 L_T} v_T (\log T+ \log K) |N \right) \right) = o(1),
\end{equation}
where $v_T$ is defined in \eqref{eq:postrate:d1T}. 

In particular if $s_0 \leq L_T \leq \delta \log \log T$, then there exists $q\geq 0$
\begin{equation}\label{eq:postrate:L1:2}
E_0\left( \Pi_k\left( \|f_k - f_k^0 \|_1>  M (\log T)^q(1 + \log K) \left(\e_T + \sqrt{\frac{ \log K}{ T}} \right)|N \right) \right) = o(1),
\end{equation}

\end{theorem}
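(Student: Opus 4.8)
The plan is to upgrade the $d_{1,T}$-contraction result of Theorem \ref{th:gene:d1T} to a contraction result in the direct $L_1$ norm by establishing a (random, event-conditional) inequality of the form $\|f_k - f_k^0\|_1 \lesssim e^{cAC_0 L_T}(\log T + \log K)\, d_{1,T}(f_k, f_k^0)$ valid on the high-probability event $\Omega_T$ (and on a suitable restriction of the posterior support where $|S|\le L_T$ and $h_{\ell k}$ has bounded sup-norm). Since $d_{1,T}(f_k,f_k^0) = T^{-1}\int_0^T |(\nu_k-\nu_k^0) + \sum_\ell \int_{t-A}^{t^-}(h_{\ell k}-h_{\ell k}^0)(t-s)\,dN^\ell(s)|\,dt$, the intensity difference is a linear functional of the parameter difference $(\nu_k-\nu_k^0, (h_{\ell k}-h_{\ell k}^0)_\ell)$, and the task is to invert this map with controlled norm. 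First I would write $g_{\ell} = h_{\ell k} - h_{\ell k}^0$ and $b = \nu_k - \nu_k^0$ and test the intensity difference against well-chosen functions of $t$: integrating over windows where only one component $N^\ell$ has fired recently isolates $\int g_\ell$, and more refined test functions (e.g. against a basis of $[0,A]$, exploiting the jump times of $N^\ell$) recover $\|g_\ell\|_1$. This is where the factor $e^{6AC_0 L_T}$ enters: one needs the empirical "design" matrix formed from the point-process increments over the $L_T$ relevant components to be invertible with operator norm controlled by $e^{O(AC_0 L_T)}$, which is the analogue of a restricted-eigenvalue / compatibility condition, and it is available here because on $\Omega_T$ the counts $N^\ell([a,b])$ concentrate around $\mu_\ell^0 \le C_0$ and because the supports are confined to $[0,A]$.

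The key steps, in order, are: (1) restrict attention to the posterior mass on $\mathcal F_T = \cup_{|S|\le L_T}\mathcal F_T(S)$ with the sup-norm bound $\|h_{\ell k}\|_\infty \le M_\infty$, which carries all but exponentially small posterior mass by assumptions (ii)--(iii) of Theorem \ref{th:gene:d1T} and the bound \eqref{piFTc}; (2) on $\Omega_T$, derive the deterministic-looking inequality $\|f_k - f_k^0\|_1 \le C e^{6AC_0 L_T}(\log T + \log K)\,d_{1,T}(f_k, f_k^0)$ for every $f_k$ in this restricted set — this is the core lemma; (3) invoke Theorem \ref{th:gene:d1T} to get $\Pi_k(d_{1,T}(f_k,f_k^0) > v_T \mid N) \to 0$ in $\mathbb P_0$-probability, and combine with (1)--(2) via a union bound to conclude \eqref{eq:postrate:L1}; (4) for \eqref{eq:postrate:L1:2}, substitute $L_T \le \delta\log\log T$ so that $e^{6AC_0 L_T} \le (\log T)^{6AC_0\delta} =: (\log T)^{q'}$ is polylogarithmic, absorb it together with the $\sqrt{\log T}$ and $\sqrt{L_T}$ factors inside $v_T$ into a single power $(\log T)^q$, and note $s_0 + \sqrt{L_T} \lesssim s_0 \lesssim$ (a log factor) to reach the displayed form $M(\log T)^q(1+\log K)(\e_T + \sqrt{\log K/T})$.

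The main obstacle is step (2): proving the reverse inequality relating $\|f_k-f_k^0\|_1$ to $d_{1,T}(f_k,f_k^0)$ with the correct $e^{6AC_0L_T}$ dependence. Controlling $|b| = |\nu_k-\nu_k^0|$ from the intensity difference is comparatively easy (look at times with no recent jumps, or average over long windows, using $c_0 \le \nu^0$ and the lower bound on the intensity). The difficulty is the functional part: one must show that the $L_1$ norms of the $g_\ell$ cannot conspire to make the aggregate intensity difference $\sum_\ell \int_{t-A}^{t^-} g_\ell(t-s)\,dN^\ell(s)$ small on average over $[0,T]$ while $\sum_\ell \|g_\ell\|_1$ stays large. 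This requires (a) a lower bound on the relevant Gram-type quantity built from the point-process paths restricted to lag-$A$ windows — essentially that the processes $N^\ell$, $\ell \in S$, are "spread out enough" in time that their contributions are not collinear — and (b) tracking how the condition number degrades with $|S| \le L_T$, giving the $e^{6AC_0L_T}$ blow-up, together with a $\log T$ (from $\sup_t N^\ell([t-A,t)) \lesssim \log T$ on a high-probability event, as in the proof of Theorem \ref{th:gene:d1T}) and a $\log K$ (from a union bound over the $\binom{K}{\le L_T}$ possible supports $S$). Handling this uniformly over the (infinite-dimensional, $N$-dependent) restricted posterior support, rather than for a fixed $f_k$, is what makes the argument delicate; the sup-norm truncation $\|h_{\ell k}\|_\infty \le M_\infty$ from assumption (i) and the covering-number control $\mathcal N(\zeta\e_T/\sqrt{L_T},\mathcal H_T,\|\cdot\|_1)$ from (ii) are the tools that let one pass from a pointwise bound to the uniform one.
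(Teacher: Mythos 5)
Your high-level plan -- lower-bound $d_{1,T}(f_k,f_k^0)$ by a multiple of $\|f_k-f_k^0\|_1$ on a high-probability event, then feed the $d_{1,T}$-rate from Theorem~\ref{th:gene:d1T} through that inequality -- is the right target, and you correctly identify that the exponential factor should scale with $|S|\le L_T$, that a $\log K$ comes from a union over $\binom{K}{\le L_T}$ supports, and that $|\nu_k-\nu_k^0|$ can be isolated by looking at windows with no recent jumps. But the core step~(2) is left entirely as a plan, and the mechanism you propose for it does not match what the paper does and has a concrete gap. You cast the inversion as a restricted-eigenvalue / Gram-matrix compatibility bound over the random ``design'' built from point-process increments, to be made uniform via the sup-norm truncation from assumption~(i) and the covering-number control from assumption~(ii). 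Neither of those assumptions enters the paper's inversion step, and it is not clear how a Gram-matrix argument would deliver the exact $e^{O(AC_0 L_T)}$ dependence: the eigenvalue lower bound you would need is not established and is nontrivial because the nuisance components' jumps contaminate every window.

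What the paper actually does is constructive and sequential. It partitions $[0,T]$ into $J_T\asymp T/(\log T\vee\log K)$ blocks, and, block by block, builds explicit events that algebraically isolate the three pieces of $\|f_k-f_k^0\|_1$: on $\mathcal E_{m,1}(S)=\{N^{S\cup S_0(k)}[t_m,t_m+2A]=0\}$ the integrand reduces to $|\nu_k-\nu_k^0|$; on $\mathcal E_{m,2}=\{N^{S_0(k)}[t_m,t_m+2A]=0\}$ combined with $\{N^\ell(t_m,t_m+A)>0\}$ it isolates $\rho_{\ell k}$ for $\ell\notin S_0(k)$; and on $\mathcal E_{m,3}$ (exactly one jump of $N^\ell$ in $(t_m,t_m+A)$, none from $S_0(k)\setminus\{\ell\}$) the integral over the block equals $\|h_{\ell k}-h_{\ell k}^0\|_1$ exactly. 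No covering or linear-algebraic inversion is needed; the exponential factor $e^{AC_1(L_T+s_0)}$ arises because the probability that such an isolating window exists is $p_s\ge e^{-(s+s_0)AC_1}$ (Lemma~\ref{lem:conc:nu}), and a Bernstein bound plus the union bound over the $\binom{K}{\le L_T}$ possible supports gives the stated rate. Moreover, the ``deterministic inequality on $\Omega_T$'' you aim for is too strong: the paper must work on the larger event $\bar\Omega_T$ (which also controls $\sum_m \bar N^m(I_m)$) and must first contract $|\nu_k-\nu_k^0|$ before it can contract $\sum_{\ell\notin S_0(k)}\rho_{\ell k}$, and contract both before it can contract $\sum_{\ell\in S_0(k)}\|h_{\ell k}-h_{\ell k}^0\|_1$ -- this sequential dependence, which your plan does not anticipate, is essential because the later isolating events only purify the later terms once the earlier ones are already known to be small.
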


\begin{remark}\label{K-LT}
The constraint $\log K =O( Te^{-6AC_0L_T} /\log T) $ is very weak, since $L_T$ is prior dependent and need only be larger than the sparsity  $s_0$. However an important consequence of \cref{th:gene:L1} is that $s_0 $ impacts the $L_1$ rate exponentially. It is unclear if the dependence on $s_0$ in the rate is sharp or if it is an artifact of the proof. Under the assumption that $s_0$ is bounded, then one can choose  a prior so that $L_T\leq \delta \log \log T $ which implies a posterior contraction rate in $L_1$ of order $O( (\e_T \vee \sqrt{(\log K)/T} )(\log T)^q)$  as long as $\log K  = o( \sqrt{T})$ for some $q >0$ which depend on $C_0$ and $\delta$.
\end{remark}

\begin{proof}[Proof of Theorem \ref{th:gene:L1}]

For all $S \subset [K]$ and $B \subset \mathbb R$, denote hereafter $N^S(B) =\sum_{\ell \in S} N^\ell(B)$. 

 First from Theorem \ref{th:gene:d1T}, on $\Omega_T$, for all $k$, 
$$ \Pi( d_{1,T}(f_k;f_k^0)\leq v_T|N ) = 1 + o_{P_0}(1). $$
Recall that
\begin{align*}
 d_{1,T}(f_k;f_k^0) &= \frac{1}{T} \int_0^T \left|\nu_k - \nu_k^0 + \sum_{\ell \in S\cap S_0(k)^c} \int^{t^-} h_{\ell k}(t-s)dN^\ell_s + \sum_{\ell\in S_0(k)}(h_{\ell k}(t-s)- h_{\ell k}^0(t-s))dN^\ell_s\right|dt
 \end{align*}
 To derive a concentration rate in $L_1$, we first derive a concentration rate on  $|\nu_k - \nu_k^0|$, then on $\sum_{\ell \in S_0(k)^c}\rho_{\ell k}$ and finally on $\sum_{\ell \in S_0(k)} \|h_{\ell k}-h_{\ell k}^0\|_1$. 
To do that we split $[0,T]$ into intervals $[m T/J_T; (m+1)T/ J_T ]$ where  $J_T \leq  \frac{ J_0 T}{ \max( \log T, \log K) } $ with $J_0>0$ possibly small. Then for each $m$ we set, as in \cite{donnet18}, $N^{0,m} $ the process obtained from $N$ where events are considered only if the ancestor (in the sense of the cluster representation of the process) is in $[(2m-1) T/(2J_T); (2m+1)T/( 2 J_T) ]$ and $I_m = [m T/J_T-A; (2m+1)T/( 2 J_T) ]$. Then using Lemma \ref{lem6:donnetsup} in Section \ref{sec:ergodicity} we have for all $H>0$ that there exists $x_0>0$ such that 
 $$ \mathbb P_0( \sum_{m=1}^{J_T}\bar N^m( I_m) > x_0 \log T ) \leq T^{-H}$$
For all $m$ let $t_m \in ( mT/(2J_T) + A, (2m+1)T/(2J_T)- 8A) $, assuming $T$ is large enough and define the event  $\mathcal E_{m,1}(S) = \{  N^{S\cup S_0(k)} [t_m , t_m+2A]=0\}$, then we bound from below 
\begin{align} 
d_{1,T}(f_k, f_k^0) \geq \left(\frac{ 1 }{T} \sum_{m=1}^{J_T} \1_{\mathcal E_{m,1}(S)} \right)2A |\nu_k-\nu_k^0|. 
\end{align}
This implies that for all $z_T>1 $ and on $\bar \Omega_T= \Omega_T\cap \{\sum_{m=1}^{J_T}\bar N^m( I_m) > x_0 \log T\}$
\begin{align*}
\Pi( |\mu_k - \nu_k^0|> z_T v_T |N) &\leq  \sum_{|S|\leq L_T} \1_{\sum_{m=1}^{J_T} \1_{\mathcal E_{m,1}(S)} > T/z_T} \pi(S|N) + o_{P_0}(1)\\
& \leq \max_{|S|\leq L_T} \1_{\sum_{m=1}^{J_T} \1_{\mathcal E_{m,1}(S)} > T/z_T} + o_{P_0}(1)
\end{align*}
Using Lemma \ref{lem:conc:nu} (ii) in Section \ref{sec:additionalLemmas}, we have for all $|S|\leq L_T$, if $z_T \geq 2p_s^{-1} T/J_T$ 
$$\mathbb P_0\left( \sum_{m=1}^{J_T} \1_{\mathcal E_{m,1}(S)} > T/z_T \right) \leq e^{-9p_s  J_T }, \quad p_s = e^{- A(s+s_0)C_1}, \quad C_1=2c_1+(c+3)C_0<  6 C_0$$
So that 
\begin{align*}
\mathbb P_0\left(\max_{|S|\leq s} \sum_{m=1}^{J_T} \1_{\mathcal E_{m,1}(S)} > T/z_T \right) &\lesssim  \sum_{s=1 }^{ L_T} \frac{e^{s\log( K/s)} }{\sqrt{s}}e^{-9p_s  J_T } +\mathbb P_0 ( \bar \Omega_T^c) \\
&\leq L_T e^{ - J_T\left( 9 e^{-AC_1(L_T+s_0)} - \frac{ L_T \log (K)}{J_T}\right) }+ o(1)
\end{align*}
This term is $o(1)$ as soon as $L_T \leq \delta \log T$ with $\delta < 1/(AC_1)$ and 
$$ \log K = o \left( \frac{ J_T e^{-AC_1 (L_T+s_0)}}{ L_T} \right), \quad \text{i.e. } \,  (\log K)^2 = o \left( \frac{ T e^{-AC_1 (L_T+s_0)}}{ L_T} \right)$$
Which then implies that 
\begin{align} \label{rate:nu}
\Pi( |\nu_k - \nu_k^0|> z_T v_T |N) &= o_{P_0}(1), \quad z_T = z_0 e^{AC_1 (L_T+s_0)}T/J_T
\end{align}
Therefore if $s_0$ is bounded and if $L_T = \delta \log T$  then this requires that $(\log K)^2 = o ( e^{-AC_1 s_0} T^{1 - \delta AC_1} /\log T) $ and $z_T = 2T^{\delta AC_1} e^{AC_1 s_0} T/J_T$ 
while if we choose a prior such that $L_T = o( \delta \log \log T)$ or even smaller then this implies that we can choose $(\log K)^3  = o( e^{-AC_1 s_0} T (\log T)^{ - \delta AC_1}/\log \log T) = o(T^{1-\epsilon})$ for all $\epsilon>0$ and $z_T  = 2 e^{AC_1 s_0}  (\log T)^{1+ \delta AC_1}$. Note that in all cases $s_0$ impacts the rate exponentially. 

Using \eqref{rate:nu},  we can bound,  on the event $\mathcal E_{m,2} = \{  N^{S_0(k)} [t_m , t_m+2A]=0\}$ and when $|\nu_k - \nu_k^0|\leq z_T v_T$, 
\begin{align*}
d_{1,T}( f_k , f_k^0) &\geq \sum_{\ell \in  S_0(k)^c} \frac{1}{T} \sum_{m=1}^{J_T-1} \1_{\mathcal E_{m,2}} \left( \int_{t_m}^{t_m+2A}  \int_{t-A}^{t^-} h_{\ell k}(t-s)dN^\ell_s - 2Az_T v_T\right)  dt\\
& \geq  \sum_{\ell \in  S_0(k)^c}\rho_{\ell k} \sum_{m=1}^{J_T-1} \1_{\mathcal E_{m,2}}  N^\ell( t_m, t_m+A) -\frac{  2Az_T v_T J_T}{ T} \\
	& \geq \sum_{\ell \in  S_0(k)^c}\rho_{\ell k} \sum_{m=1}^{J_T-1} \1_{\mathcal E_{m,2}}  \1_{N^\ell( t_m, t_m+A)>0} -\frac{  2Az_T v_T J_T}{ T} .
\end{align*}
Let $X_{m,l}= \1_{ N^{A_0}[t_m, t_m+2A)=0}  \1_{N^{l}( [t_m, t_m+A))>0}$ and
$\tilde X_{m,l} = \1_{ N^{m,0,S_0}[t_m, t_m+2A)=0}  \1_{N^{m,0,l}( [t_m, t_m+A))>0}$
We can then bound on $\bar \Omega_T$ 
\begin{align*}
\max_{l\in S_0(k)^c}\sum_{m=1}^{J_T} |X_{m,l} -\tilde X_{m,l}|  &\leq  \sum_{m=1}^{J_T}\1_{\bar N^m(I_m)>0} \leq  x_0 \log T.
\end{align*}  
Thus using Lemma \ref{lem:conc:nu} (iii) with Hoeffding inequality together with the same argument as for $Z_{m,l}$,  we obtain for all  $u>0$ such that $uJ_T  a_1 >> \log T$,
\begin{align*}
\mathbb P_0 \left( \min_{l\notin S_0(k) }\sum_m (X_{m,l} - E(X_{m,l}) \leq - 2u J_T p(l) \right) \leq K e^{- \frac{  u^2 a_1 J_T}{ 4} } + \mathbb P_0( \bar \Omega_T^c) = o(T^{-H}),
\end{align*}
for all $H$ , since $\log K = o(J_T)$. This leads to 
\begin{equation}\label{conc:rho}
\mathbb E_0 \left[\Pi \left( \sum_{l\in S_0(k)^c} \rho_{lk}> M_1z_Tv_T| N \right)\right]=o(1)
\end{equation}
for some $M_1>0$ . Note that $M_1 $ is exponentially increasing in $s_0$. 

Finally we study for some $M_1'>0$,
$\Pi \left( \sum_{l\in S_0(k)}\|h_{lk}-h_{lk}^0\|_1> M_1'z_Tv_T |N \right). $
When $|\nu_k -\nu_k^0|\leq v_T z_T $ and $\sum_{l\notin S_0(k)}\rho_{lk}\leq M_1z_Tv_T$ then 
\begin{align*}
d_{1,T} (f_k , f_k^0) &\geq \frac{1}{T} \int_0^T| \sum_{l\in S_0(k)}(h_{lk}-h_{lk}^0)(t-s)dN^l_s  -v_T z_T - \frac{1}{T}\sum_{l\notin S_0(k)} \rho_{lk}N^l( -A, T)\\
& \geq \frac{1}{T} \int_0^T| \sum_{l\in S_0(k)}(h_{lk}-h_{lk}^0)(t-s)dN^l_s - 2C_0M_1z_Tv_T.
\end{align*}
We thus need only to bound from below 
$$
I := \frac{1}{T} \int_0^T| \sum_{l\in S_0(k)}(h_{lk}-h_{lk}^0)(t-s)dN^l_s|dt \geq \frac{1}{T} \sum_{m=1}^{J_T} \1_{\mathcal E_{m,3}(l) }\int_{t_m}^{t_m+2A}|h_{lk}-h_{lk}^0|(t-U_m^l)dt,
$$
for all $l\in S_0(k)$ where 
 $$\mathcal E_{m,3} = \{ N^{l}( t_m, t_m+A)=1, \, N^{l}( t_m+A, t_m+2A)=0, \, N^{S_0(k)\setminus \{l\}}(t_m, t_m+2A)=0\}$$
 and $U_m^l$ is the event time of $N^l $ in $(t_m, t_m+A)$.
 On $\mathcal E_{m,3}$ 
 $$\int_{t_m}^{t_m+2A}|h_{lk}-h_{lk}^0|(t-U_m^l)dt = \|h_{lk}-h_{lk}^0\|_1$$
 and using Lemma \ref{lem:conc:nu} (iv) we bound from below 
 $$ q(l) = \mathbb P_0(\mathcal E_{m,3}) \geq a_2 >0$$
 and using the same decompositions as before we bound for all $u_0>0$ 
  $$\mathbb P_0 \left(  \sum_{m=1}^{J_T} (\1_{\mathcal E_{m,3}(l) } - q(l) ) \leq -u_0 J_T \right) \leq e^{-u_0^2 q(l) J_T/4}  + \mathbb P_0( \bar \Omega_T^c)$$
  which terminates   the proof of Theorem \ref{th:gene:L1}. 

\end{proof}




\begin{remark}\label{rk:oracle}
If the connectivity graph (or equivalently $S_0$) were known, then under the Hypotheses (i)-(ii) of Proposition \ref{oracle} on the prior, 
the $L_1$ posterior contraction rate verifies
\begin{equation*}
E_0\left( \Pi_k( \|f_k - f_k^0 \|_1\gtrsim  M_T' e^{6AC_0 s_0}s_0 \e_T \log T |N ) \right) = o(1) .
\end{equation*}
This sharper controls allows to derive a two step procedure described in Section \ref{sec:priors} for which the posterior contraction rate is also of order $e^{6AC_0 s_0}s_0 \e_T$ as soon as $s_0 $ is bounded and $\min\{ \rho_{\ell k}^0, \ell \in S_0(k)\}>> v_T'$ where $v_T' = v_T (\log T)^{6 \delta AC_0} (\log T+ \log K)$. This significantly improves the posterior contraction rate when $\log K \geq  T^{\epsilon}$ for some $\epsilon >0$.
 
\end{remark} 

When the connectivity graph is not known the prior must induce enough sparsity for assumptions (i)-(iii) to be valid and a key quantity to control is $L_T$. To do so we consider a type model selection prior.

\section{ Examples of prior models }\label{sec:priors}


In this section we consider a model selection prior as in Remark \ref{Uniforprior}: 
 $$ |S| \sim \Pi_{1,S} , \quad \Pi_S(S| |S|=s) = \frac{ s! (K-s)!}{K!}, $$
 and we discuss various nonparametric choices for $\Pi_h$ the prior on $h_{\ell k}$ when $\ell \in S$. 

First note that Condition (iii) of Theorem \ref{th:gene:L1} holds for instance if the prior $\Pi_{1,S}$ on $|S|$ is truncated at an arbitrary large constant $\bar s$ or at  $L_0 \log\log T$ or  alternatively  if its tails are of the form
 \begin{equation}\label{tailS}
     \Pi_{1,S}(|S| > x) \leq C\exp( -a_3 e^{ a_2 e^{a_1 x^q }}), \quad q> 1, \quad  C,a_1, a_2, a_3>0 .
 \end{equation}
 
Note that the commonly used Spike and Slab prior defined by 
$ h_{\ell k} \sim p \Pi_h+ (1-p) \delta_{(0)} $, with $\delta_{(0)} $ the Dirac mass at $0$ is a Model selection prior  with a Binomial $\mathcal B( K , p)$ distribution on $|S|$ but it is not a reasonable choice. Indeed 
for Condition (iii) to hold, one needs $p = o(1/K)$. More precisely, if $p \gtrsim 1/K$ then  the tail of $|S|$ decreases at best like $e^{-\lambda x \log x}$ when $Kp \leq \lambda$ or like $e^{- x^2 /kp }$ with $x> Kp$ which implies that $L_T \gtrsim Kp+\sqrt{Kp} \sqrt{T\e_T^2 \log T } >> \log T$. Thus necessarily $p = o(1/K)$ and for each dimension the expected number of non zero interaction functions is $o(1)$. 


While $L_T$ is controlled by the sparsity structure of the prior distribution, $\e_T$ is typically a consequence of the smoothness of the functions $h_{l k }^0$, $l\in S_0(k)$ and of properties of the prior distributions $\Pi_h$ so that assumptions  (i)-(ii) hold. These assumptions are standard in the Bayesian nonparametric literature aside from the constraints

\subsection{Examples of priors $\Pi_h$ and implications on $\e_T$}

The prior marginal  distributions  $\Pi_h$ defined in \eqref{model:prior} are characterized by parametric representations of $h$ (resp. $\bar h$) with infinite or large number of parameters. We present a number of priors and their associated $\e_T$, which have been derived in \cite{donnet18}. 

\begin{itemize}
\item Random histogram prior : Let $0=t_0<t_1< \cdots, t_I=A$, then $h \sim \Pi_h$ if 
\begin{equation}\label{histo}
h(x)= \sum_{ i =0}^{I-1} h_i \1_{x\in (t_i, t_{i+1}]},\quad h_i \geq 0, \quad \sum_{i}h_i(t_{i+1}-t_i)<1
\end{equation}
$\Pi_h$ is then obtained by considering a prior distribution  on $I,( h_0, \cdots, h_{I-1})$. For instance $I \sim \mathcal P(a)$ for some $a>0$ and given $I$, set $(w_1, \cdots, w_{I+1}) \sim \mathcal D(\alpha, \cdots, \alpha)$ and define for $0\leq i \leq I-1$ $h_i = w_{i+1}/(t_{i+1}-t_i)$ . 

\begin{corollary}\label{coro:histo}
Assume that for all $\ell \in S_0(k)$, $h_{\ell k}^0 $ is $\beta $  Holder with $\beta \leq 1$. Then the random histogram  prior $\Pi_h$ defined above implies that 
$\e_T = (T/\log T)^{-\beta/(2\beta+)}$. Then under the assumptions of Theorems \ref{th:gene:d1T} and \ref{th:gene:L1} on $K$ and under Hypotheses \ref{cond:suprho}-\ref{cond:sparsity2}, together with $L_T \leq \delta \log \log T$ the $d_{1,T}$ and the $L_1$ posterior contraction rates are bounded by 
 $$  M_T (\log T)^q \max( T^{-\beta /(2\beta+1)} , \sqrt{ \log K/T} )$$
 \end{corollary}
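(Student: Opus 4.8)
The plan is to specialize the two general theorems (\cref{th:gene:d1T} and \cref{th:gene:L1}) to the random histogram prior, so the entire argument reduces to verifying hypotheses (i)–(iii) of \cref{th:gene:d1T} with the right choice of $\e_T$, $\mathcal H_T$ and $L_T$, and then reading off the rates. First I would fix the approximation/small-ball analysis: for a $\beta$-H\"older function $h^0$ on $[0,A]$ with $\beta\le 1$, a piecewise-constant function on a regular grid with $I$ bins approximates $h^0$ in $\|\cdot\|_\infty$ (hence in $\|\cdot\|_2$) at rate $I^{-\beta}$; balancing this bias against the prior mass cost of concentrating the Dirichlet weights near the optimal histogram — which costs $e^{-c I \log(1/\e)}$ up to constants, together with the Poisson mass $\Pi(I)\gtrsim e^{-c I\log I}$ — gives the usual tradeoff $I\asymp (T/\log T)^{1/(2\beta+1)}$ and thus $\e_T\asymp (T/\log T)^{-\beta/(2\beta+1)}$. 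This is exactly the computation carried out in \cite{donnet18} for fixed $K$, so I would cite it for the single-coordinate small-ball bound $\Pi_h(\|h_{\ell k}-h^0_{\ell k}\|_2\le\e_T,\ \|h_{\ell k}\|_\infty\le M_\infty)\gtrsim e^{-\kappa_0 T\e_T^2}$, and then just multiply through by the $s_0$ and $\log K$ factors demanded by hypothesis (i); the prior $\Pi_S(S=S_0(k))\gtrsim e^{-C_s s_0\log K}$ holds for the uniform-on-size prior as noted in \cref{Uniforprior}, provided $\Pi_{1,S}(|S|=s_0)\gtrsim e^{-Cs_0\log K}$.

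Next I would handle the sieve (hypothesis (ii)): take $\mathcal H_T=\{h$ histogram with $I\le I_T$ bins, $\|h\|_\infty\le B_T\}$ for $I_T\asymp T\e_T^2/\log T$ and $B_T$ a slowly growing bound (e.g. $B_T=\delta\log T$, which also matches the $\|h\|_\infty\le M_\infty$ and $0\in\mathcal H_T$ requirements after enlarging $M_\infty$ if needed). The $L_1$-covering number of $\mathcal H_T$ is controlled by the number of grid choices times an $\epsilon$-net of the weight simplex scaled by $B_T$, giving $\log\mathcal N(\zeta\e_T/\sqrt{L_T},\mathcal H_T,\|\cdot\|_1)\lesssim I_T\log(B_T\sqrt{L_T}/\e_T)\lesssim T\e_T^2$ for the stated $\e_T$; and $\Pi_h(\mathcal H_T^c)$ is bounded by the Poisson tail $\Pi(I>I_T)\le e^{-cI_T\log I_T}$ plus the tail of the (compactly supported, bounded) weights, which is superexponentially small in $T\e_T^2$, hence dominates the required $e^{-M_T s_0^2 T\e_T^2\log(\log T+s_0)-M_T s_0\log K}$ once $\log K=o(\sqrt T)$. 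Hypothesis (iii), $\Pi_S(|S|>L_T)$ small, is where the $L_T\le\delta\log\log T$ assumption enters: as discussed just before \cref{sec:priors}, truncating $\Pi_{1,S}$ at $L_0\log\log T$ (or imposing the doubly-exponential tail \eqref{tailS}) makes this probability exactly zero (resp. superexponentially small), so (iii) holds. With (i)–(iii) in hand, \cref{th:gene:d1T} gives the $d_{1,T}$ rate $v_T\asymp\sqrt{\log T}\,((s_0+\sqrt{L_T})\e_T+\sqrt{s_0\log K/T})$, and since $s_0$ is treated as $O(1)$ and $L_T\le\delta\log\log T$, this is $(\log T)^{q}\max(\e_T,\sqrt{\log K/T})$ up to constants; inserting $\e_T=(T/\log T)^{-\beta/(2\beta+1)}$ and absorbing the $(\log T)$ powers gives $(\log T)^q\max(T^{-\beta/(2\beta+1)},\sqrt{\log K/T})$.

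Finally, for the $L_1$ rate I would invoke the second part of \cref{th:gene:L1}: the hypothesis there is again (i)–(iii) of \cref{th:gene:d1T} (already verified) plus $s_0\le L_T\le\delta\log\log T$ and the side conditions on $\log K$, which with $L_T=O(\log\log T)$ reduce to $\log K=o(\sqrt T)$ (since $e^{\pm cAC_0 L_T}$ is then a power of $\log T$); \eqref{eq:postrate:L1:2} then yields the same bound $M(\log T)^{q'}(1+\log K)(\e_T+\sqrt{\log K/T})$, which is of the announced form after possibly increasing $q$. The main obstacle — really the only non-bookkeeping point — is making sure that all the $K$- and $s_0$-dependent exponential factors in hypotheses (i)–(iii) are genuinely met by the histogram prior: the small-ball mass and the sieve complement mass must beat $e^{-M_T s_0^2 T\e_T^2\log(\log T+s_0)-M_T s_0\log K}$ with $M_T\to\infty$, and this forces the specific choice of $I_T$ and the weight-tail estimates to be done with enough room; once $s_0=O(1)$ and $\log K=o(\sqrt T)$ this is comfortable, but it is the place where a careless constant would break the argument. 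Everything else is a direct transcription of the fixed-$K$ histogram computations of \cite{donnet18} together with the combinatorial factors already isolated in \cref{Uniforprior} and the discussion preceding \cref{sec:priors}.
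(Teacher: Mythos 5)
Your proposal is correct and follows the route the paper implicitly intends: the corollary is stated without an explicit proof (the paper points to \cite{donnet18} for the fixed-$K$ histogram small-ball and entropy computations), and your plan reconstructs exactly those computations and then plugs them into conditions (i)--(iii) of \cref{th:gene:d1T} together with \cref{Uniforprior} and the truncation-at-$\log\log T$ device stated just before \cref{sec:priors}. One small caveat worth flagging, not a gap in your argument but a mismatch with the corollary as printed: reading off \eqref{eq:postrate:L1:2} gives the $L_1$ rate $(\log T)^{q'}(1+\log K)\bigl(\e_T+\sqrt{\log K/T}\bigr)$, and the extra $(1+\log K)$ factor cannot in general be absorbed into $(\log T)^q$ (the only standing constraint is $\log K=o(\sqrt T)$, so $\log K$ may be a power of $T$). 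Your ``after possibly increasing $q$'' elides this; the honest conclusion from \cref{th:gene:L1} carries the $(1+\log K)$ factor, and the corollary's display appears to suppress it. The $d_{1,T}$ rate and the derivation of $\e_T=(T/\log T)^{-\beta/(2\beta+1)}$ are exactly right, including correcting the paper's typo in the exponent.
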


\item log-spline  priors:  
We have  $h \sim \Pi_h$ if  
\begin{equation}\label{spline}
\begin{split}
h(x) &= e^{\sum_{j=1}^J \theta_j B_j(x)}  , \quad  J \sim \mathcal P(a)\\  
 \theta_j , j\leq J &\stackrel{ iid }{\sim} \mathcal N(0, \tau^2), \quad \tau >0
\end{split}
\end{equation}
where $(B_1, \cdots, B_J)$ is a B-spline basis of order $m$ associated to the regular grid $(0, iA/I, i\leq I) $, see for instance Section 4 of \cite{ghosal00}. Then a direct adaptation of the results of Section 4 of \cite{ghosal00} leads to 
\begin{corollary}\label{coro:spline}
Assume that for all $\ell \in S_0(k)$, $h_{\ell k}^0 $ is $\beta $  Holder with $m +1 \leq \beta>0$. Then the log-spline  prior $\Pi_h$ defined in \eqref{spline} implies that 
$\e_T = (T/\log T)^{-\beta/(2\beta+)}$. Then under the assumptions of Theorems \ref{th:gene:d1T} and \ref{th:gene:L1} on $K$ and under Hypotheses \ref{cond:suprho}-\ref{cond:sparsity2}, together with $L_T \leq \delta \log \log T$ the $d_{1,T}$ and the $L_1$ posterior contraction rates are bounded by 
 $$  M_T (\log T)^q \max( T^{-\beta /(2\beta+1)} , \sqrt{ \log K/T} )$$
 \end{corollary}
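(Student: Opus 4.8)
\emph{Proof plan.}
The plan is to show that the log-spline prior \eqref{spline}, together with the model selection prior on $S$ used throughout this section, satisfies conditions (i)--(iii) of Theorems \ref{th:gene:d1T} and \ref{th:gene:L1} with $\e_T = (T/\log T)^{-\beta/(2\beta+1)}$ and some $L_T\le \delta\log\log T$; substituting this $\e_T$ and this $L_T$ into the rate $v_T$ of \eqref{eq:postrate:d1T}, and into the $\log\log T$ regime \eqref{eq:postrate:L1:2} of Theorem \ref{th:gene:L1}, then yields $M_T(\log T)^q\max(T^{-\beta/(2\beta+1)},\sqrt{(\log K)/T})$ for a $q$ depending only on $\beta$. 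Condition (iii), together with the first bound in (i) (the one on $\Pi_S(S=S_0(k))$), involves only the prior on $S$: for the uniform-on-size prior with $\Pi_{1,S}$ truncated at $L_0\log\log T$ (or with tails \eqref{tailS}) these hold exactly as discussed at the beginning of this section, and force $L_T\le\delta\log\log T$. What remains is the pair of conditions in (i)--(ii) bearing on the single-function slab $\Pi_h$, which are the usual Kullback--Leibler prior-mass and sieve conditions for log-spline priors and follow by adapting Section 4 of \cite{ghosal00}.

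For the prior-mass part of (i), fix $\ell\in S_0(k)$. By Hypothesis \ref{cond:sparsity2} the function $h_{\ell k}^0$ is bounded, and being $\beta$-Hölder with $0<\beta\le m+1$, B-spline theory gives, for $J_T\asymp (T/\log T)^{1/(2\beta+1)}$, a vector $\theta^\star\in\mathbb R^{J_T}$ with $\|\theta^\star\|_\infty=O(1)$ and $\|\exp(\sum_{j\le J_T}\theta_j^\star B_j)-h_{\ell k}^0\|_\infty\lesssim J_T^{-\beta}\asymp\e_T$ (if $h_{\ell k}^0$ may vanish one approximates $\log(h_{\ell k}^0+\e_T)$, still $\beta$-Hölder up to constants). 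On the event $\{J=J_T,\ \max_{j\le J_T}|\theta_j-\theta_j^\star|\le c\e_T/\sqrt{J_T}\}$ one obtains simultaneously $\|h-h_{\ell k}^0\|_2\lesssim\e_T$ and $\|h\|_\infty\le M_\infty$; its prior probability is at least $e^{-a}a^{J_T}/J_T!$ times a product of $J_T$ Gaussian densities over a box of side $c\e_T/\sqrt{J_T}$ around $\theta^\star$, hence $\gtrsim e^{-c_1 J_T\log(1/\e_T)}=e^{-c_2 T\e_T^2}$ since $J_T\log(1/\e_T)\asymp J_T\log T\asymp T\e_T^2$; and $e^{-c_2 T\e_T^2}\ge e^{-\kappa T s_0\e_T^2\log(\log T+s_0)}$ for $T$ large because $s_0\log(\log T+s_0)\to\infty$. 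This gives the second bound in (i).

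For the sieve condition (ii), take $\mathcal H_T=\{h=\exp(\sum_{j\le J}\theta_j B_j):J\le J_T,\ \|\theta\|_\infty\le A_T\}\cup\{0\}$ with $J_T$ as above and $A_T\asymp\log T$. Using the Lipschitz dependence of $h$ on $\theta$ on this set, the $L_1$-covering number of $\mathcal H_T$ at radius $\zeta\e_T/\sqrt{L_T}$ is $\lesssim J_T\log(A_T J_T\sqrt{L_T}/\e_T)\lesssim J_T\log T\lesssim T\e_T^2$; and $\Pi(\mathcal H_T^c)\le \Pi(J>J_T)+\Pi(\exists j\le J_T:|\theta_j|>A_T)\lesssim e^{-c J_T\log J_T}+J_T e^{-A_T^2/(2\tau^2)}$. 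Feeding these bounds, $\e_T$, and $L_T\le\delta\log\log T$ into $v_T$ of \eqref{eq:postrate:d1T} and into \eqref{eq:postrate:L1:2} then produces the stated $d_{1,T}$ and $L_1$ contraction rates.

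The main obstacle, and where ``direct adaptation'' hides the real work, is making the exponents in (ii)--(iii) match: the remaining-mass requirements on $\Pi(\mathcal H_T^c)$ and $\Pi_S(|S|>L_T)$ are of order $e^{-M_T s_0^2 T\e_T^2\log(\log T+s_0)-M_T s_0\log K}$ with $M_T\to\infty$, which is more demanding than the $e^{-cT\e_T^2}$ that a Poisson$(a)$ truncation on $J$ yields directly; reconciling this with the entropy budget $\mathcal N\le T\e_T^2$ is what pins down the truncation levels $J_T$ and $L_T$ (and is why one needs $s_0$ bounded to land on $L_T\le\delta\log\log T$) and the admissible range of $\log K$ coming from Theorems \ref{th:gene:d1T}--\ref{th:gene:L1}. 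A secondary point, absent from the classical single-function analysis, is that all constants produced above ($\kappa$, $M_\infty$, the implicit constants in spline approximation and in the Gaussian density bounds) must depend only on $\beta,A,m,a,\tau$ and on $\bar h_0,c_0,c_1$, and not on $K$ or $T$, so that the $s_0\log K$ bookkeeping carries through uniformly.
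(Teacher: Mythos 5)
Your plan — verify conditions (i)–(iii) for the log-spline slab with $\e_T=(T/\log T)^{-\beta/(2\beta+1)}$, use a uniform-on-size model-selection prior truncated so that $L_T\le\delta\log\log T$, and substitute into the rates of \eqref{eq:postrate:d1T} and \eqref{eq:postrate:L1:2} — is precisely the ``direct adaptation of Section 4 of Ghosal et al.\ (2000)'' that the paper invokes without detail, so your route is the intended one. The prior-mass half of (i) is handled correctly: the observation that $e^{-c_2 T\e_T^2}$ dominates $e^{-\kappa Ts_0\e_T^2\log(\log T+s_0)-C_ss_0\log K}$ for $T$ large, because the extra factor $s_0\log(\log T+s_0)$ tends to infinity, is exactly what makes the single-function GGV computation portable to this setting (as is your note that one approximates $\log(h^0_{\ell k}+\e_T)$ if $h^0_{\ell k}$ may vanish).

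Where the argument has a genuine gap — which you flag honestly but do not close — is condition (ii). First, $A_T\asymp\log T$ is too small: the union-bound tail $J_T e^{-A_T^2/(2\tau^2)}\asymp e^{-c(\log T)^2}$ does not come close to the required $e^{-M_Ts_0^2T\e_T^2\log(\log T+s_0)-M_Ts_0\log K}$, whose exponent grows like a power of $T$; one must take $A_T\gtrsim\sqrt{T\e_T^2}$ (cheap, since $\log A_T$ remains $O(\log T)$ so the entropy bound is unaffected). Second, and more seriously, the Poisson$(a)$ tail $\Pi(J>J_T)\asymp e^{-J_T\log J_T}\asymp e^{-cT\e_T^2}$ with your $J_T\asymp(T/\log T)^{1/(2\beta+1)}$ still falls short, because $M_T\log(\log T+s_0)\to\infty$. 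The standard repair is to enlarge the sieve: take $J_T'\asymp J_T\log\log T$, which gives $\Pi(J>J_T')\lesssim e^{-cT\e_T^2\log\log T}$ and dominates the required exponent once $s_0$ is bounded and $\log K\lesssim T\e_T^2$ (both implied in the $L_T\le\delta\log\log T$ regime). The resulting entropy $J_T'\log T\asymp T\e_T^2\log\log T$ then exceeds the budget $T\e_T^2$, so $\e_T$ must be replaced by $\e_T\sqrt{\log\log T}$ throughout; the $(\log T)^q$ slack in the stated rate absorbs this inflation. Without this reconciliation step, the $\mathcal H_T$ you construct does not satisfy the remaining-mass inequality in (ii), so the final substitution into $v_T$ is not justified as written.
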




 \end{itemize}
 

Many other types of priors could be considered, such as log - Gaussian process priors, mixtures of rescaled Beta priors , for which $\e_T$ have been derived either in the context of Hawkes processes (for the latter , in\cite{donnet18}) or in the context of density  or nonparametric regression estimation. 

In the following section we suggest a two step procedures which allow to derive a rate which only depends on $\e_T$ and $s_0$ as soon as $\log K = o(\sqrt{T})$.  

\subsubsection{ A two step procedure} \label{sec:two-steps}
Under the conditions of Theorem \ref{th:gene:L1} with $L_T\leq \delta \log \log T$ the following two step procedure then allows to improve significantly the posterior contraction rate when $K$ is very large: For all $k\in [K]$
 \begin{itemize}
 \item Step 1: 
 \begin{itemize}
     \item (i) Implement the full posterior distribution  $\Pi_k (\cdot |N) $  and compute 
 $\hat \rho_k = (\mathbb E^{\pi}( \rho_{\ell k} | N), \ell\in [K])$ 
 \item (ii) For all $k$, rank $\hat \rho_{\ell k}$ in decreasing order and denote it $\tilde \rho_k  = (\hat \rho_{\sigma(\ell) k}, \ell \in [K])$, choose a threshold level $u_T = o(1) $ and define $\hat S(k)= \{ \ell ; \sigma(\ell) < \hat \ell \} $ where $\hat \ell = \min \{\ell ; \, \sum_{j \geq \ell} \hat \rho_{\sigma(j) k} \leq u_T\} $.
 \end{itemize}
 \item Step 2: Implement the posterior distribution  conditional on $S(k)= \hat S(k)$, denoted $\Pi_{2,k} ( \cdot | N) = \Pi_k( \cdot | N, S(k) = \hat S(k))$.
 \end{itemize}
 Then we have the following proposition,
 
 \begin{proposition}\label{prop:2steps}
 Under the conditions of Theorem \ref{th:gene:L1} with $L_T \leq \delta \log\log T$ for some $\delta >0$ and if in addition to (iii)  with a selection prior as in Remark \ref{Uniforprior} the prior  $\Pi_{1,S}$ on $|S|$ also verifies
 \begin{equation}\label{priorP1_plus}
 \sum_{s>  L_T} s \Pi_{1,S}(s) \leq e^{- M_T s_0^2 T \e_T^2 \log(\log T + s_0)- M_T s_0 \log K}
 \end{equation}
 choosing $o(1) = u_T >> v_T' = v_0'(\log T)^{q}(1+ \log K)(\e_T + \sqrt{\log K/T})$, the rate obtained in Theorem \ref{th:gene:L1}  and as soon as $\inf \{ \rho_{\ell k}^0; \ell \in S_0(k)\} \geq  2u_T$,
  \begin{align}\label{Set-estimation}
  \mathbb P_0 \left( \hat S(k) \neq S_0(k)\right) = o(1)
  \end{align}
  and  for some $M_T'$ going to infinity, 
  \begin{equation}\label{step2rate}
     \mathbb E_0[\Pi_{2,k}( \| f_k -f_k^0\|_1 \leq  M_T's_0 \e_T e^{ 6As_0 C_0} \log T|N)]  = 1 + o(1)
  \end{equation}
  \end{proposition}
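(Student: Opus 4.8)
# Proof Plan for Proposition \ref{prop:2steps}

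The plan is to establish the two conclusions in sequence: first the consistency of the set estimator $\hat S(k)$ in \eqref{Set-estimation}, and then, conditionally on the event $\{\hat S(k) = S_0(k)\}$, the posterior contraction rate \eqref{step2rate} for the second-stage posterior.

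\textbf{Step 1: controlling the posterior means $\hat\rho_{\ell k}$.} The key is to show that $\mathbb E^{\pi}(\rho_{\ell k}\mid N)$ is, with $\mathbb P_0$-probability tending to one, uniformly close to $\rho_{\ell k}^0$ at rate $v_T'$. From Theorem \ref{th:gene:L1} we know $\mathbb E_0[\Pi_k(\|f_k - f_k^0\|_1 > v_T'\mid N)] = o(1)$, hence also $\mathbb E_0[\Pi_k(\sum_{\ell}|\rho_{\ell k} - \rho_{\ell k}^0| > v_T' \mid N)] = o(1)$ since $|\rho_{\ell k} - \rho_{\ell k}^0| \leq \|h_{\ell k} - h_{\ell k}^0\|_1$. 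To pass from posterior concentration to a bound on the posterior mean I must control the contribution of the tail event $\{\sum_\ell |\rho_{\ell k} - \rho_{\ell k}^0| > v_T'\}$ to $\mathbb E^\pi(\rho_{\ell k}\mid N)$; on that event $\rho_{\ell k} \leq 1$ by definition of $\mathcal H$, but a uniform-in-$\ell$ bound on $\sum_{\ell} \mathbb E^\pi(\rho_{\ell k}\1_{|S|>L_T}\mid N)$ is needed, which is exactly where the extra tail hypothesis \eqref{priorP1_plus} on $\Pi_{1,S}$ enters: it bounds $\sum_{s>L_T} s\,\Pi_{1,S}(s)$ by a quantity small enough to absorb the normalizing-constant lower bound coming from condition (i) of Theorem \ref{th:gene:d1T} (the same computation as in \eqref{gene:bound}--\eqref{piFTc}). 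Combining these gives $\mathbb P_0(\max_\ell |\hat\rho_{\ell k} - \rho_{\ell k}^0| \lesssim v_T') \to 1$, and in fact $\sum_\ell |\hat\rho_{\ell k} - \rho_{\ell k}^0| \lesssim v_T'$.

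\textbf{Step 2: from the $\hat\rho$ bound to $\hat S(k) = S_0(k)$.} On the event just described, for $\ell \in S_0(k)$ we have $\hat\rho_{\ell k} \geq \rho_{\ell k}^0 - v_T' \geq 2u_T - v_T' \geq \tfrac32 u_T$ (using $\inf\{\rho_{\ell k}^0 : \ell \in S_0(k)\} \geq 2u_T$ and $v_T' = o(u_T)$), while for $\ell \notin S_0(k)$ we have $\hat\rho_{\ell k} \leq v_T'$, and moreover $\sum_{\ell \notin S_0(k)} \hat\rho_{\ell k} \lesssim v_T' = o(u_T)$. Hence in the decreasing rearrangement $\tilde\rho_k$, all $|S_0(k)| \leq s_0$ "large" coordinates (size $\gtrsim u_T$) come first, followed by coordinates whose total mass is $o(u_T) < u_T$; therefore the stopping index $\hat\ell$ selects exactly $S_0(k)$. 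This yields \eqref{Set-estimation}. One must check the threshold is crossed at the right place: $\sum_{j \geq \hat\ell}\hat\rho_{\sigma(j)k}\leq u_T$ fails for any $\ell \leq |S_0(k)|$ since then the sum still contains a term of size $\gtrsim u_T$, and succeeds at $\ell = |S_0(k)|+1$.

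\textbf{Step 3: the second-stage rate.} On $\{\hat S(k) = S_0(k)\}$, the conditional posterior $\Pi_{2,k}(\cdot\mid N)$ is exactly the "oracle" posterior with known active set $S_0(k)$, to which Proposition \ref{oracle} and Remark \ref{rk:oracle} apply, provided the prior on $(h_{\ell k}, \ell \in S_0(k))$ satisfies hypotheses (i)--(ii) of Proposition \ref{oracle} — which it does, being the same $\Pi_h$ with $|S| = s_0 \leq \delta\log\log T$ fixed, so that in the bounds of Proposition \ref{oracle} one has $L_T$ replaced by $s_0$. Remark \ref{rk:oracle} then gives the $L_1$ contraction rate $M_T' e^{6AC_0 s_0} s_0 \e_T \log T$, which is \eqref{step2rate}. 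Since $\mathbb P_0(\hat S(k) \neq S_0(k)) = o(1)$, we may condition on the complementary event at the cost of an $o(1)$ term, yielding the unconditional statement.

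\textbf{Main obstacle.} The delicate point is Step 1: converting the posterior-probability bound of Theorem \ref{th:gene:L1} into a bound on the posterior \emph{mean} $\hat\rho_{\ell k}$, uniformly over $\ell$, which requires a quantitative handle on the posterior mass of the region where $|S|$ is large. This is the sole reason for the additional assumption \eqref{priorP1_plus}, and the estimate must be carried out with the same care for the $e^{-C_s s_0\log K}$ normalizing constant as in the proof of Theorem \ref{th:gene:d1T}; everything downstream (Steps 2--3) is then comparatively routine, modulo invoking Proposition \ref{oracle}.
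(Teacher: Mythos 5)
Your proposal follows essentially the same route as the paper: Step 1 matches the paper's use of convexity plus boundedness $\rho_{\ell k}\le 1$ and the extra tail hypothesis \eqref{priorP1_plus} to turn the $L_1$ posterior contraction of Theorem \ref{th:gene:L1} into the bound $\|\hat\rho_k-\rho_k^0\|_1\lesssim v_T'$ on a high-probability event; Step 2 reproduces (in slightly more explicit ranking language) the paper's contradiction argument for $\hat S(k)=S_0(k)$; and Step 3 invokes the oracle rate from Proposition \ref{oracle} and Remark \ref{rk:oracle} exactly as the paper does. No substantive difference in approach.
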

 \begin{proof}[Proof of Proposition \ref{prop:2steps}]
 We first prove \eqref{Set-estimation}.
  Note that by convexity and boundedness of $\rho_{\ell k}\leq 1$
  \begin{align*}
       \|\hat \rho_k - \rho_k^0\|_1 &\leq E^\pi( \|\rho_k - \rho_k^0\|_1 |N ) \leq v_T' +E^\pi(\1_{\|\rho_k - \rho_k^0\|_1>v_T'} (|S|+s_0)|N)  
   \end{align*}
   since $T\e_T^2 \geq (\log T)^3$.
 Also $\Pi( \|\rho_k - \rho_k^0\|_1>v_T'|N) \leq \Pi( \|f_k -f_k^0\|_1>v_T'|N)$, so that 
 from the Proof of Theorem \ref{th:gene:L1}, on an event of probability going to 1,  $\Pi( \|f_k -f_k^0\|_1>v_T'|N) \leq  \Pi( d_{1,T}(f_k, f_k^0)>v_T|N) + \Pi( |S(k)| > L_T|N)$ where $v_T'$ is the rate obtained in Theorem \ref{th:gene:L1} and $v_T$ that of Theorem \ref{th:gene:d1T}. Moreover, on an event of probability going to 1 
  $$ \Pi( d_{1,T}(f_k, f_k^0)>v_T|N) + \Pi( |S|> L_T|N) \leq e^{ - M_T' s_0 ( s_0T\e_T^2 \log \log T + \log K)} $$
  on $\Omega_T \cap \{ D_{T,k} \geq e^{- z_T s_0( s_0T\e_T^2 \log \log T + \log K)}\}$ which has probability going to 1 for any $z_T$ going to infinity 
  \begin{align*}
  \sum_{s>L_T} s\Pi( |S(k)|=s|N) &\leq e^{ z_T s_0( s_0T\e_T^2 \log \log T + \log K)} \sum_{s>L_T}s \Pi_{1,S}(s)\leq 
  e^{ - M_Ts_0( s_0T\e_T^2 \log \log T + \log K)/2} 
  \end{align*}
  for some $M_T'$ going to infinity. Finally  
 on an event of probability $1 + o(1)$,
  \begin{align*}
       \|\hat \rho_k - \rho_k^0\|_1 \leq 2 v_T' 
   \end{align*}
 where $v_T' \lesssim  (\log T)^q (s_0\e_T + \sqrt{ s_0 \log K/T})$ for some $q>0$. 
 Note that if $S_0(k) \cap \hat S(k)^c \neq \emptyset$ then there exists $ \ell \in S_0(k)$ such that $\hat \rho_{\ell k}\leq u_T$ which in turns implies that $|\hat \rho_{\ell k}- \rho_{\ell k}^0|> u_T $ and $\|\hat \rho_k - \rho_k^0\|_1\geq u_T$ which has probability going to 0. Similarly if $S_0(k)^c \cap \hat S(k) \neq \emptyset$, then there exists $\ell $ such that $\hat \rho_{\ell k} > u_T$ and $\rho_{\ell k}^0 = 0$ which also implies that $\|\hat \rho_k - \rho_k^0\|_1> u_T$. This proves \eqref{Set-estimation}.

 On the event $\hat S(k) = S_0(k)$ the posterior distribution given $S(k) = \hat S(k)$ is the posterior distribution given $S(k) = S_0(k)$, whose posterior contraction rate is given by  $M_T's_0 \e_T e^{ 6As_0 C_0}$, as seen in Remark \ref{rk:oracle}. This terminates the proof of Proposition \ref{prop:2steps}.
  \end{proof}

Under the priors considered in Section \ref{sec:priors} and if $s_0$ is bounded and the functions $h_{\ell k}^0$ are $\beta $ Holder, then the $L_1$ posterior contraction rate of the two - step procedure is 
  $v_T'= M_T'T^{-\beta/(2\beta+1)} (\log T)^{1+ \beta /(2\beta+1)}$, as soon  as $\log K = o ( \min (T^{1/3-\epsilon}, T^{(\beta-\epsilon)/(2\beta+1)} )$ for any $\epsilon>0$.

\section{discussion} \label{sec:discussion}
As mentioned in Section \ref{sec:Intro}, understanding how sparsity affects the hardness of the estimation problem is not trivial and there are no results so far on the fundamental limits of this problem. In our paper we have tried to derive results under rather simpler sparsity patterns and while we believe that our constraints on the sparsity to derive rates in the empirical loss function $d_{1,T}$ are close to being optimal, it is not at all clear that the constraint $L_T< \delta \log \log T$ (and thus $s_0 $) required for the convergence in $L_1$ is necessary. To shed more light on the role of sparsity in high dimensional Hawkes processes, we discuss in this section, existing results and assumptions on high-dimensional Hawkes processes. We recall that all previous results hold for frequentist approaches (based on penalised least-square estimators) and no results have been previously obtained for Bayesian methods.
 
In the nonparametric setting and for linear Hawkes processes, \cite{Chen2017} propose an interaction graph estimator $\hat S$ which contains the true graph with high probability, under minimal assumptions of stationarity and boundedness on the parameter and if the (known) sparsity $s_0$  and the dimension $K$ are such that $s_0^{1/2}K^2 = o(e^{T^{1/6}})$. They also require a fixed lower bound on the $L_2$ (instead of the $L_1$) norm of the non zero interaction functions, which means that in order to verify their stationary assumption they also need $s_0 $ to be bounded. Moreover, unlike our setting, this procedure does not allow to recover neither the ground-truth intensity nor parameter.
 
For linear nonparametric Hawkes processes, under the condition $s_0 \log K = o(T)$, \cite{bacry-etal-2020} propose a penalized least-square estimate whose $l_2$-stochastic risk is controlled by $s\log K$ up to some random constants. However,  the constants entering these bounds are not explicitly controlled in the regime $K \to \infty$.
 
The results of \cite{Tang-Li-2023} also provide insight into high-dimensional nonparametric Hawkes processes. Their approach is original in that it uses a tensor representation and then imposes subgroup structures on the coefficient tensor. The sparsity constraint is expressed via $R$, the rank of the tensor of function-basis coefficients. Then, the authors obtain rates proportional to $\sqrt{R\log T/T}$. 
Under our sparsity pattern $R$ would correspond to $L_T K J$ where $J$ is the number of basis functions used to represent $h_{\ell k}$, which leads to a suboptimal (and non adaptive ) rate. Note however that the framework of
\cite{Tang-Li-2023} allows for more general sparsity pattern is not so well suited for ours. They also assume
a lower bound on the eigenvalues of large matrices. This condition seems  much stricter than ours and not so easy to meet.
 
For nonlinear Hawkes processes, \cite{chen2019} obtained a convergence rate for the cross-covariance estimates of order $O(T^{-2/5})$  under the assumption $\|\rho\|_\infty<1$ and the intensity is  bounded. No theoretical guarantees are provided for the estimation of parameters of the model. \cite{cai2024latent} use penalised B-splines and prove that the $l_2$-stochastic risk is of order $O(s\lambda_{max} \log K)$, under the assumptions that $\|\rho^T \rho\|< 1$, $\|\rho\|_\infty < 1$ the intensity is bounded by $\lambda_{max}$, and $s$ and $K$ are such that $s=o(T^{2/5}), \log K = O(T^{1/5})$. Observe that assuming that the intensity is bounded is a  strong assumption and does not allow to consider the ReLU link function or simple linear Hawkes processes.
 
In a parametric linear Hawkes model including covariates, \cite{kreiss2025commondriverssparselyinteracting}  propose a debiased LASSO-type estimate with convergence rate of order $O(s\log (KT)/\sqrt{T})$ in a regime where both $s$ and $K$ are dominated by a power of $T$ and $\|\rho\|_1 < 1$. Several of their other assumptions such as the random compatibility condition are not explicit and difficult to verify. Also in the parametric setting, \cite{wang2025statistical} design a testing procedure for detecting edges in a high-dimensional Hawkes graph. Their assumptions is similar to \cite{cai2024latent} with the additions that $s^4 \vee \log K =o(T^{1/5})$, $\|\rho^T\|_\infty< \infty,$ and $\|\rho\|_\infty < 1$.

It thus appears that under a sparsity pattern defined in terms of the number of non zero interaction functions per dimensions and in terms of the operator norms of $\rho^0$, our results allow to control the convergence rate either in the weaker $d_{1,T}$ loss  or in the stronger  $L_1$ loss under conditions on $K$ which are weaker than the existing results. Our results  under the $d_{1,T}$ loss also allow for weak assumptions on the sparsity and lead to adaptive (with respect to the smoothness of the functions) nonparametric convergence rates. Under much stronger assumptions on the sparsity $s_0$, these results can also be transferred to the $L_1$ loss. Note also that the proof which allow to bound from bellow the distance $d_{1,T}$ by the distance $L_1$ can be used in non Bayesian settings and could be adapted to some of the results derive in the papers described above.

\section{Appendix} \label{sec:proofs}
We include here Lemma \ref{lem:conc:nu} together with Lemma \ref{lem6:donnetsup} which are crucial to derive $L_1 $ posterior contraction rates from $d_{1,T}$ posterior contraction rates.

\subsection{Lemma \ref{lem:conc:nu} used in the Proof of Theorem \ref{th:gene:L1}} \label{sec:additionalLemmas}
\begin{lemma}\label{lem:conc:nu}
Under the assumptions of Theorem \ref{th:gene:d1T} with  $L_T \leq \delta \log T$ and $0 <\delta <1/(6 A C_0)$, let $B\geq 2A$ and 
  then  for all $S\subset [K]$ such that $|S| =s \leq L_T +s_0	$
 \begin{itemize}
 \item (i)   $\mathbb P_0( N^S [0,B]=0 ) \geq e^{-s)[Bc_1+AcC_0  +(B+A) C_0] }:= p_s(B), $
 \item (ii)  
 \begin{align}
 \mathbb P_0\left( \sum_{m=1}^{J_T-1} \1_{\mathcal E_{m,1}(S)} > T/z_T \right) \leq e^{ - 9 J_T e^{-(s+s_0)A C_1 }  }   \quad z_T = \frac{2 T}{ J_Tp_{s+s_0} (2A)} 
   \end{align}
   \item (iii) for all $k$ and all $l \in S_0(k)^c$ , let $a_0 = 1 - e^{-c_0A}$,
$$ p(l) = \mathbb P_0 [ N^{S_0(k)}[t_m, t_m+2A)=0 \& N^{l}[t_m, t_m+A)>0] \geq a_1>0$$  with $a_1= (1 - e^{-c_0A})e^{-As_0(2c_1 +(c+1+2/a_0)C_0)}$
 \end{itemize}

\end{lemma}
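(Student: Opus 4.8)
The plan is to reduce all three bounds to void‑probability estimates for the Poisson cluster (branching) representation of the stationary linear Hawkes process, available since $SpR(\rho^0)<1$: $N$ is the superposition of i.i.d.\ clusters rooted at a Poisson immigrant process on $\mathbb{R}$ with type‑$\ell$ rate $\nu_\ell^0$, each event of type $\ell$ at time $u$ spawning, independently over $m$, a Poisson process of type‑$m$ offspring with intensity $h_{\ell m}^0(\cdot-u)$ on $(u,u+A)$. Two facts are used throughout: for measurable $E\subset\mathbb{R}\times[K]$, $\mathbb{P}_0(N(E)=0)=\exp(-\Lambda(E))$ where $\Lambda(E)=\int_{\mathbb{R}}\sum_\ell\nu_\ell^0\,\mathbb{P}\big(\text{the cluster of a type-}\ell\text{ immigrant at }u\text{ charges }E\big)\,du$; and $\Lambda(E)\le\mathbb{E}_0[N(E)]$, which telescopes through generations into sums against $(I-\rho^0)^{-1}$ and $\mu^0=(I-\rho^{0Tr})^{-1}\nu^0$. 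This is what makes the bounds dimension‑free: by Hypothesis~\ref{cond:suprho} every $\rho^0$‑sum is a convergent geometric series and by Hypothesis~\ref{cond:sparsity2} $\nu^0$ is bounded, so all quantities depend on $S$ only through $|S|=s$. A naive conditioning argument, by contrast, would be forced to empty \emph{all} $K$ coordinates and would lose a factor $K$; avoiding that (and, in (ii), decoupling successive windows) is the main difficulty.

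For (i), apply the above with $E=[0,B]\times S$. Splitting the no‑event‑path intensity $\int_0^B\sum_{\ell\in S}\lambda_t^{(\ell)}\,dt$ crudely into a background part ($\le sBc_1$), a within‑$S$ pre‑$0$ part ($\le sAcC_0$, using $\Vvert\rho^0\Vvert_\infty\le R_\infty c$, $\mu^0\le C_0$, $|S|\le s$) and an external‑coordinate part ($\le s(B+A)C_0$, using $\mathbb{E}_0[N^m(-A,B]]\le(B+A)C_0$ and $\Vvert\rho^{0Tr}\Vvert_\infty\le R_1c$) bounds $\Lambda([0,B]\times S)$ by $s[Bc_1+AcC_0+(B+A)C_0]$, hence $\mathbb{P}_0(N^S[0,B]=0)\ge p_s(B)$. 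Equivalently, one may condition on the history and on the configuration on $(-A,0)$ and apply Jensen to the exponential formula, the same three contributions appearing.

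For (ii), (i) applied to $S\cup S_0(k)$ (so $s+s_0$ coordinates, window $2A$, and $2Ac_1+AcC_0+3AC_0=AC_1$) gives $\mathbb{P}_0(\mathcal{E}_{m,1}(S))\ge p_{s+s_0}(2A)=e^{-(s+s_0)AC_1}$ for each $m$. To obtain a lower‑tail bound for $\sum_m\1_{\mathcal{E}_{m,1}(S)}$ despite the dependence between windows, use the decomposition $N=N^{0,m}+\bar N^m$ from the proof of Theorem~\ref{th:gene:L1}, where $N^{0,m}$ retains only the clusters whose immigrant lies in the $m$‑th block: the $(N^{0,m})_m$ are mutually independent, $\1_{\mathcal{E}_{m,1}(S)}\ge\1_{\{N^{0,m}\text{ has no }(S\cup S_0)\text{-point in }[t_m,t_m+2A]\}}-\1_{\{\bar N^m(I_m)>0\}}$, and $\sum_m\1_{\{\bar N^m(I_m)>0\}}\le\sum_m\bar N^m(I_m)\le x_0\log T$ with probability $\ge1-T^{-H}$ by Lemma~\ref{lem6:donnetsup}. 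Since $N^{0,m}\le N$, each independent indicator has success probability $\ge p_{s+s_0}(2A)$, and a Chernoff lower‑tail bound at level $T/z_T=J_Tp_{s+s_0}(2A)/2$ (with $J_Tp_{s+s_0}(2A)\gg\log T$ absorbing the $x_0\log T$ term) gives the claimed exponential bound.

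For (iii), set $A=\{N^{S_0(k)}[t_m,t_m+2A)=0\}$ and $B=\{N^l[t_m,t_m+A)\ge1\}$ with $l\notin S_0(k)$, and write $\mathbb{P}_0(A\cap B)=\mathbb{P}_0(A)-\mathbb{P}_0(A\cap\{N^l[t_m,t_m+A)=0\})=e^{-\Lambda_1}-e^{-\Lambda_2}$, $\Lambda_1\le\Lambda_2$, where $\Lambda_2-\Lambda_1$ is the intensity of clusters charging $l$ in $[t_m,t_m+A)$ but not $S_0(k)$ in $[t_m,t_m+2A)$. An offspring‑free type‑$l$ immigrant landing in $[t_m,t_m+A)$ is such a cluster, so $\Lambda_2-\Lambda_1\ge A\nu_l^0\prod_{m'}e^{-\rho_{lm'}^0}\ge Ac_0e^{-R_\infty c}$; with $e^{-\Lambda_1}=\mathbb{P}_0(A)\ge p_{s_0}(2A)$ from (i), $e^{-\Lambda_1}(1-e^{-(\Lambda_2-\Lambda_1)})\ge a_1$, the factor $1-e^{-c_0A}=a_0$ appearing since the intensity of $N^l$ is $\ge\nu_l^0\ge c_0$. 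The technical heart throughout is setting up the branching representation carefully (finiteness of clusters under $SpR(\rho^0)<1$, the void‑probability identity, independence of the $N^{0,m}$'s) and controlling the cross‑window remainder $\sum_m\bar N^m(I_m)$; matching the explicit constants ($C_1$, $a_1$, the numerical factor in (ii)) is then bookkeeping with the crude bounds above.
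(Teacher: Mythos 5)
Your proof is essentially correct, but it follows a genuinely different route from the paper's for parts (i) and (iii). The paper works with a change of measure: it writes $p_S$ as an expectation under a reference law $\mathbb Q_{f_0;t_m}$ (history up to $t_m$ followed by independent unit-rate Poisson noise on the window) of the likelihood ratio $\mathcal L_{[t_m,t_m+B)}(f_0)$ restricted to the void event, then factors $\mathcal L = \mathcal L(f^0_{\tilde S^c})\mathcal L(f^0_{\tilde S})$ and lower-bounds each factor term by term, producing the three contributions $B c_1$, $A c C_0$, $(B+A)C_0$ by conditioning and Jensen. You instead invoke the exact void-probability identity for the Poisson cluster (immigrant--offspring) representation of the stationary linear Hawkes process, $\mathbb P_0(N(E)=0)=\exp(-\Lambda(E))$, and dominate $\Lambda(E)$ by $\mathbb E_0[N(E)]$. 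This is a legitimate and arguably cleaner argument --- for $E=[0,B]\times S$ it even gives the sharper bound $\exp(-sBC_0)$ rather than the paper's looser three-term exponent --- but it is tied to the linear/branching structure and would not transfer to, say, nonlinear Hawkes intensities, whereas the paper's likelihood-ratio route would. For (iii) the same divergence appears: you compute $\mathbb P_0(A\cap B)=e^{-\Lambda_1}(1-e^{-(\Lambda_2-\Lambda_1)})$ and lower-bound $\Lambda_2-\Lambda_1$ by the intensity of offspring-free type-$l$ immigrants (giving the factor $1-e^{-Ac_0e^{-R_\infty c}}$ in place of the paper's $a_0=1-e^{-c_0A}$, which the paper obtains by conditioning on $\mathcal G_{t_m}$ and bounding $\mathbb P(N^l[0,A)>0\mid\mathcal G_0)\geq 1-e^{-\nu_l^0A}$); the constants differ but both are positive, dimension-free, and decay only with $s_0$, which is what the lemma needs. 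For (ii) your argument coincides with the paper's: both use the block decomposition $N=N^{0,m}+\bar N^m$ and the inequality $\1_{\mathcal E_{m,1}(S)}\geq Y_m-\1_{\bar N^m(I_m)>0}$ with independent $Y_m$, followed by a Chernoff/Bernstein lower-tail bound and absorption of the $x_0\log T$ cross-window remainder using $J_T p_{s+s_0}(2A)\gg\log T$. One small inconsistency worth fixing in your write-up of (iii): you announce the factor $a_0=1-e^{-c_0A}$ ``appearing'' from your void-probability bound, but your computation actually yields $1-e^{-Ac_0 e^{-R_\infty c}}$, a different (smaller) constant; this is harmless for the lemma's use but the phrasing conflates the two approaches.
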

\begin{proof}[Proof of Lemma \ref{lem:conc:nu}]

Let $S \subset [K]$,  write $\tilde S = S \cup \tilde S_0(k)$ and  $\tilde Z_{m,S}= 1_{N^{\tilde S, 0 , m}[t_m , t_m + 2A)=0} - \mathbb P_0 ( N^{\tilde S, 0 , m}[t_m , t_m + 2A]=0 $, then 
\begin{align*}
 \sum_{m=1}^{J_T-1} \1_{\mathcal E_{m,1}(S)} & =  \sum_{m=1}^{J_T-1} \1_{N^{\tilde S}[t_m , t_m + 2A)=0}\\
 &  \geq ( J_T-1) p_s + \sum_{m=1}^{J_T-1} \tilde Z_{m,S} - \sum_{m=1}^{J_T}  \1_{N^{\tilde S, 0 , m}[t_m , t_m + 2A)=0} \1_{N^{\tilde S, 0 , m}[t_m , t_m + 2A)>0}\\
 & \geq  ( J_T-1) p_s + \sum_{m=1}^{J_T-1} \tilde Z_{m,S} - \sum_{m=1}^{J_T}\1_{\bar N^m( I_m) >0}
 \end{align*}
 since $N^{\tilde S, 0 , m}[t_m , t_m + 2A]\leq N^{\tilde S}[t_m , t_m + 2A]$. 
 On $\bar \Omega_T$, $\sum_{m=1}^{J_T}\1_{\bar N^m( I_m) >0}\leq x_0 \log T$ and 
we now bound from below 
$$p_S = \mathbb E_{\mathbb Q_{f_0; t_m}}\left[ \mathcal L_{[t_m,t_m+B)}(f_0 )\1_{N^{\tilde S}[t_m,t_m+B)=0}\right] $$
where $ \mathbb Q_{f_0; t_m}$ is the distribution $ \mathbb P_{f_0}$ over $(-\infty , t_m)$ and an independent product of homogeneous PP on $[t_m, t_m+ B)$. For the sake of simplicity and without loss of generality (by stationarity) we consider below $t_m=0$.
\begin{align*}
\mathcal L_{[0,B]}(f_0) &= \mathcal L_{[0,B]}(f_{\tilde S^c}^0) \mathcal L_{[0,B]}(f_{\tilde S}^0) \\
& = \mathcal L_{[0,B]}(f_{\tilde S^c}^0) e^{B(|\tilde S|-\nu_{\tilde S}^0) - \sum_{l\in {\tilde S}} \sum_{l'} \int_0^B\int_{t-A}^{t-}h_{l'l}^0(t-s)dN^{l'}_s dt }\\
& \geq \mathcal L_{[0,B]}(f_{{\tilde S}^c}^0) e^{B(|{\tilde S}|-\nu_{\tilde S}^0) - \sum_{l\in {\tilde S}} \sum_{l'}\rho_{l'l}^0N^{l'}(-A,0)- \sum_{l\in {\tilde S}} \sum_{l'\in {\tilde S}^c}\rho_{l'l}^0N^{l'}(0,B)  }
\end{align*} 
 with $\nu_{\tilde S}^0 = \sum_{l \in {\tilde S}} \nu_l^0$ and where we have used $N^{\tilde S} (0,B)=0$. 
We also have that 
\begin{align*}
\mathcal L_{[0,B]}(f_{{\tilde S}^c}^0) & = 
\prod_{l\in \tilde S^c}  \prod_{t \in N^l(0,B]}\lambda_t( f_l^0 )
e^{(1- \nu_l^0)B - \int_0^B \sum_{l'\tilde S^c} \int_{t-A}^{t^-}h_{l'l}^0(t-s)dN^{l'}_sdt - \int_0^A \sum_{l' \in \tilde S} \int_{t-A}^{t^-\wedge 0}h_{l'l}^0(t-s)dN^{l'}_sdt } \\
& \geq 
\mathcal L_{[0,B]}(\tilde f_{{\tilde S}^c}^0) e^{- \sum_{l\in \tilde S^c}\int_0^A \sum_{l' \in \tilde S} \int_{t-A}^{t^-\wedge 0}h_{l'l}^0(t-s)dN^{l'}_sdt - \int_0^B \sum_{l'\tilde S^c} \int_{t-A}^{t^-}h_{l'l}^0(t-s)dN^{l'}_sdt }
\end{align*}  
where $\tilde f_{{\tilde S}^c}^0 = (\nu_l^0; h_{l'l}^0, l',l\in \tilde S^c) $ in other words it has  forced $h_{l'l}^0=0$ for $l'\in \tilde S$. 
Using 
 $$ \int_0^A\int_{t-A}^{t^-\wedge 0}h_{l'l}(t-s)dN^{l'}_sdt \leq \rho_{l'l} N^{l'}(-A,0), \quad  \int_0^B\int_{t-A}^{t^-}h_{l'l}(t-s)dN^{l'}_sdt \leq \rho_{l'l} N^{l'}(-A,B), $$
 we obtain 
 \begin{align*}
\mathbb Q_{f_0}&\left[ \mathcal L_{[0,B]}(f_0 )\1_{N^{\tilde S}(0,B)=0}\right] 
 \geq e^{-\nu_{\tilde S}^0B}
\mathbb E_{\mathbb P_0}\left[ 
	\mathbb E_{P_{\tilde f_{\tilde S^c}^0}} \left( \left.e^{- \sum_{l\in \tilde S^c} \sum_{\tilde S} \rho_{l'l}N^{l'}(-A,0)  - \sum_{l\in S}\sum_{l'\notin \tilde S}\rho_{l'l}^0N^{l'}(-A,B) } \right|\mathcal G_0 \right) \right] \\
	&\geq  e^{-\nu_{\tilde S}^0B}e^{- \sum_{l\in \tilde S^c} \sum_{l'\in \tilde S} \rho_{l'l}E_{\mathbb P_0}(N^{l'}(-A,0) ) - \sum_{l\in S}\sum_{l'\notin \tilde S}\rho_{l'l}^0 E_{\mathbb P_0}( N^{l'}(-A,B)) } \\
	& \geq  e^{-\nu_{\tilde S}^0B}e^{-A \sum_{l\in \tilde S^c} \sum_{l'\in \tilde S} \rho_{l'l}\mu_{l'}^0  -(B+A) \sum_{l\in S}\sum_{l'\notin \tilde S}\rho_{l'l}^0 \mu_{l'}^0  }\\
	& \geq & \geq  e^{-(s+s_0)[Bc_1+AcC_0  +(B+A) C_0]},
 \end{align*} 
 which proves (i) and where we have used   $\max_{l'}\sum_l \rho_{l'l}^0\leq c<1$. 
  Finally using Bernstein inequality for Bernoulli variables with $\tilde p_S = \mathbb P_0(N^{\tilde S, 0, m}[t_m,t_m+2A]=0) $  and choosing $B = 2A$
 \begin{align*}
\mathbb P_0 ( |\sum_m \tilde Z_{m,S} | > x )  & \leq e^{-\frac{ x^2 }{ J_T \tilde p_S(1-\tilde p_S) + x/3} } \leq e^{ - \frac{ T^2 }{ 2z_T^2J_Tp_{s+s_0}(2A)  + Tz_T/3} }	\\
&\leq e^{ - \frac{ 36 J_T^2 p_{s+s_0}^2 }{4 J_Tp_{s+s_0}(2A) } }\leq e^{ - 9 J_T e^{-(s+s_0)A C_1 }  } 
\end{align*}
by choosing $x = T/z_T = 6 J_T p_{s+s_0}$  
and noting that 
$$\tilde p_S  \leq P_0(N^{\tilde S}[0,2A]=0)+\mathbb P_0( \bar N^m ( I_m)=0) \leq p_{s+s_0}(2A) +\frac{  x_0 \log T }{ J_T} \leq 2 p_{s+s_0}(2A) $$
as soon as $e^{-(s+s_0)A C_1 } > x_0(\log T)^2 /(J_0T)$, i.e. $L_T  < \delta \log T$ with $\delta \leq  1/(AC_1) - \epsilon$  for some $\epsilon>0$,
 which proves (ii). 
 We now prove (iii) . Define, for $l\notin S_0(k)$,  $p(l) = E_0( X^{m,l})= E_{\mathbb Q_{f_0}}\left[ \mathcal L_{[0,2A]}(f_0) \1_{ N^{S_0(k)}[0,2A)=0} \1_{N^{l}( [0, A))> 0} \right] $, by stationarity. 
When $N^{S_0(k)}[0,B)=0$,
\begin{align*}
\mathcal L_{[0,2A]}(f_0)  & = \mathcal L_{[0,2A]}(f_{ S_0(k)^c}) e^{2A(s_0-\nu_{S_0(k)}^0) - \sum_{j\in {S_0(k)}} \sum_{l'\in [K]} \int_0^{2A}\int_{t-A}^{t-}h_{l'j}(t-s)dN^{l'}_s dt } \\
& \geq \mathcal L_{[0,2A]}(f_{S_0^c(k)}) e^{2A(s_0-\nu_{S_0(k)}^0) - \sum_{j\in S_0(k)} \sum_{l'}\rho_{l'j}^0N^{l'}(-A,0)- \sum_{j\in S_0(k)} \sum_{l'\in S_0(k)^c}\rho_{l'j}^0N^{l'}(0,2A)  } 
\end{align*}
Also 
\begin{align*}
\mathcal L_{[0,2A]}(f_{S_0(k)^c}) &= \prod_{j \in S_0(k)^c} \prod_{T \in N^j_{[0,2A)}} \lambda_T(f_j^0) e^{(1- \nu_j)2A- \int_0^{2A} \sum_{l'\in S_0(k)^c} \int_{t-A}^{t^-}h_{l'j}(t-s)dN^{l'}_sdt - \int_0^A \sum_{l' \in S_0(k)} \int_{t-A}^{t^-\wedge 0}h_{l'j}(t-s)dN^{l'}_sdt } \\
& \geq 
\mathcal L_{[0,2A]}(\tilde f_{S_0(k)^c}^0) e^{- \sum_{j\in S_0(k)^c}\int_0^A \sum_{l' \in S_0(k)} \int_{t-A}^{t^-\wedge 0}h_{l'j}(t-s)dN^{l'}_sdt } \\
& \geq \mathcal L_{[0,2A]}(\tilde f_{S_0(k)^c}^0) e^{- \sum_{j\in S_0(k)^c} \sum_{l' \in S_0(k)} \rho^0_{l'j}N^{l'}[-A,0) }
\end{align*}
where similarly to before, $\tilde f_{S_0(k)^c}^0  = (\nu_l^0, h_{l'l}^0, l', l \in A_0^c)$ so that 
\begin{align*}
p(l) &\geq  e^{-2A\nu_{S_0(k)}^0} E_{P_{f_0}}\left[ e^{- \sum_{j\in S_0(k)^c} \sum_{l' \in S_0(k)} \rho^0_{l'j}N^{l'}[-A,0) }e^{ - \sum_{j\in S_0(k)} \sum_{l'}\rho_{l'j}^0N^{l'}(-A,0)} \times \right. \\
 & \quad \left.  E_{P_{\tilde f_0}}\left[  \left.e^{- \sum_{j\in S_0(k)} \sum_{l'\in S_0(k)^c}\rho_{l'j}^0N^{l'}(0,2Q)  } \1_{N^{l}( [0, A))> 0}  \right| \mathcal G_0\right] \right]
\end{align*} 
Now 
\begin{align*}
\mathbb E_{P_{\tilde f_0}}& \left[  \left.e^{- \sum_{j \in S_0(k)} \sum_{l'\in S_0(k)^c}\rho_{l'j}^0N^{l'}(0,B)  } \1_{N^{l}( [0, A))> 0}  \right| \mathcal G_0\right]\\
& \geq 
\mathbb P_{\tilde f_0} \left[ \left. N^{l}( [0, A))> 0 \right| \mathcal G_0\right] \exp\left( - 
 \sum_{j \in S_0(k)} \sum_{l'\in S_0(k)^c}\rho_{l'j}^0\mathbb E_{P_{\tilde f_0}}( N^{l'}(0,2A) | \mathcal G_0 ,  N^{l}( [0, A))> 0 )\right)
\end{align*} 
First note that 
$$ \mathbb P_{\tilde f_0} \left[ \left. N^{l}( [0, A))> 0 \right| \mathcal G_0\right] \geq 1 - e^{-\nu_l^0 A} \geq 1 - e^{-c_0A} := a_0$$
so that we also have that 
$$ E_{P_{\tilde f_0}}( N^{l'}(0,2A) | \mathcal G_0 ,  N^{l}( [0,A))> 0 ) \leq 
\frac{  E_{P_{\tilde f_0}}( N^{l'}(0,2A) | \mathcal G_0  )}{ 1 - e^{-\nu_l^0A} }\leq  \frac{  \mathbb E_{P_{ f_0}}( N^{l'}(0,2A) | \mathcal G_0  )}{ 1 - e^{-\nu_l^0A} }$$
which in turns leads to 
\begin{align*}
p(l) &\geq a_0 e^{-2A\nu_{S_0(k)}^0} E_{P_{f_0}}\left[ e^{- \sum_{j\in S_0(k)^c} \sum_{l' \in S_0(k)} \rho^0_{l'j}N^{l'}[-A,0) }e^{ - \sum_{j\in S_0(k)} \sum_{l'}\rho_{l'j}^0N^{l'}(-A,0)} \times \right. \\
 & \quad \left. \exp\left( - 
 \sum_{j \in S_0(k)} \sum_{l'\in S_0(k)^c}\frac{ \rho_{l'j}^0}{ a_0 }\mathbb E_{P_{\tilde f_0}}( N^{l'}(0,2A) | \mathcal G_0  )\right)\right]\\
 & \geq  a_0 e^{-2A\nu_{S_0(k)}^0} \exp\left[ - A \sum_{j\in S_0(k)^c} \sum_{l' \in S_0(k)} \rho^0_{l'j}\mu_{l'}^0-A \sum_{j\in S_0(k)} \sum_{l'}\rho_{l'j}^0\mu_{l'}^0-2A\sum_{j\in S_0(k)} \sum_{l'\in S_0(k)^c}\frac{\rho_{l'j}^0}{ a_0} \mu_{l'}^0 \right]\\
 & \geq a_0 e^{-\nu_{S_0(k)}^0} \exp\left[ - A c s_0C_0-(A + 2A/a_0)s_0C_0\right] \geq  a_1.
\end{align*}
 \end{proof}

\subsection{Ergodicity } \label{sec:ergodicity}

\subsubsection{Lemma \ref{lem6:donnetsup}}
In this section we control the erodicity of the process, using a construction, as in \cite{donnet18}.  Let $(H_t(N))_t$ a predictable process, let $J_T = \lfloor  J_0 T/ \log T\rfloor $  and for all $m \leq J_T$ define $N^{0,m}$ the subprocess of $N$ whose ancestors are born in $[(2m-1)T/(2J_T), (2m+1)T/(2J_T)]$. Note that the subprocesses $ N^{0,m} $ are independent and identically distributed. 

Let $I_m = [2mT/(2J_T) -A, (2m+1)T/(2J_T)]$ and $\bar N^m(I_m)  = N(I_m) - N^{0,m}(I_m)$
then we have the following refinement of Lemma 6 in the supplement of \cite{donnet18:supplement}:
\begin{lemma}\label{lem6:donnetsup}
Under Hypotheses \ref{cond:suprho}-\ref{cond:sparsity2} and if $J_T \leq  \frac{ Tt(c) }{ 3\log ( \|\nu^0\|_1  )} \asymp \frac{T}{\log K}$ with $t(c) = c-1 - \log c$,  then for all $x >0$  
$$\mathbb P_0( \sum_{m=1}^{J_T} \bar N^m(I_m) > x T )\leq 2 e^{ - t(c) x T/4} .$$
In particular, choosing $x = x_0 \log T/T$ we obtain 
$$\mathbb P_0( \sum_{m=1}^{J_T} \bar N^m(I_m) > x_0\log T )\leq 2 T^{-x_0t(c)/4}$$
\end{lemma}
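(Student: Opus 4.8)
The plan is to work in the Poisson cluster (branching) representation of the linear Hawkes process and control $\sum_{m=1}^{J_T}\bar N^m(I_m)$ by a Chernoff bound, keeping the dependence on $K$ explicit through the total immigration rate $\|\nu^0\|_1=\sum_\ell\nu_\ell^0\asymp K$; this is the proof of Lemma~6 in \cite{donnet18:supplement}, refined so that the constant controlling $J_T$ is allowed to grow with $K$. In the cluster representation, immigrants form a Poisson process on $\mathbb R$ of intensity $\nu_\ell^0$ on coordinate $\ell$, and each immigrant of type $\ell$ independently generates a finite cluster whose law depends only on $\ell$; write $C_i$, $\tau_i$, $D_i$ for the cluster, birth time and depth of immigrant $i$. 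A point counted by $\bar N^m(I_m)$ lies in $I_m\subset W_m=[(2m-1)T/(2J_T),(2m+1)T/(2J_T)]$ and has its cluster ancestor born before $\inf W_m$, hence (interactions being supported in $[0,A]$) lies more than $T/(2J_T)-A$ after that ancestor, so its cluster has depth $\ge n^*-1$ and therefore at least $n^*$ points, with $n^*\asymp T/(AJ_T)$; moreover the ancestor of such a cluster must sit in a time window $L(|C_i|)$ of length $\le 2T+A|C_i|$ (its cluster must meet $\cup_m I_m$), while the total contribution of a cluster to the sum is at most $|C_i|$ since the $I_m$ are disjoint. Collecting this,
\begin{equation*}
\sum_{m=1}^{J_T}\bar N^m(I_m)\ \le\ \sum_i |C_i|\,\1_{|C_i|\ge n^*}\,\1_{\tau_i\in L(|C_i|)} .
\end{equation*}

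I would then bound the moment generating function by Campbell's formula for the Poisson cluster process: for $\theta>0$,
\begin{equation*}
\mathbb E_0\Big[e^{\theta\sum_{m}\bar N^m(I_m)}\Big]\ \le\ \exp\Big(\|\nu^0\|_1\sum_{n\ge n^*}e^{\theta n}\,(2T+An)\,\sup_\ell \mathbb P_\ell(|C|=n)\Big).
\end{equation*}
The ingredient that makes everything work is a tail bound $\sup_\ell\mathbb P_\ell(|C|=n)\lesssim n^{-3/2}e^{-t(c)n}$ for $t(c)=c-1-\log c$, uniform in $\ell$ and in $K$: for a single coordinate $|C|$ has exactly the Borel$(c)$ law, whose mass function is this (note $ce^{1-c}=e^{-t(c)}$), and in the multitype case it follows from Hypothesis~\ref{cond:suprho} by the same domination/recursion used to bound the moments of $N$ in \cite{leblanc2025}. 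Inserting this, the $n$-sum is a geometric-type series of ratio $e^{\theta-t(c)}$, so for fixed $\theta<t(c)$ it is at most $C_\theta\,(T+An^*)\,e^{-(t(c)-\theta)n^*}\le C_\theta' T\,e^{-(t(c)-\theta)n^*}$ (since $An^*\le T$), whence $\mathbb E_0[e^{\theta\sum_m\bar N^m(I_m)}]\le \exp\big(C_\theta' T\,\|\nu^0\|_1\,e^{-(t(c)-\theta)n^*}\big)$.

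It then remains to choose $\theta$ and invoke the hypothesis on $J_T$. Taking $\theta=t(c)/4$ and using $J_T\le \min(J_0 T/\log T,\ Tt(c)/(3\log\|\nu^0\|_1))$ — so that $(t(c)-\theta)n^*=\tfrac34 t(c)\,n^*\gtrsim \log T+\log\|\nu^0\|_1$ — the exponent $C_\theta' T\|\nu^0\|_1 e^{-(t(c)-\theta)n^*}$ is bounded, and one arranges it $\le\log 2$, giving $\mathbb E_0[e^{(t(c)/4)\sum_m\bar N^m(I_m)}]\le 2$. Markov's inequality then yields $\mathbb P_0(\sum_m\bar N^m(I_m)>xT)\le 2e^{-t(c)xT/4}$; the ``in particular'' statement is immediate on taking $x=x_0\log T/T$, since then $2e^{-t(c)xT/4}=2T^{-x_0 t(c)/4}$.

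The step I expect to be the main obstacle is the uniform exponential control of the cluster size: establishing $\sup_\ell\mathbb E_\ell[e^{\theta|C|}]<\infty$ for all $\theta$ below a fixed fraction of $t(c)=c-1-\log c$, with constants independent of $K$, using only the operator-norm Hypothesis~\ref{cond:suprho} — this is where the precise radius $t(c)$ enters and where the multitype branching structure must be compared to a single-type subcritical Galton--Watson process with offspring mean $c$. Everything else is bookkeeping, but the bookkeeping is precisely the point of the refinement over \cite{donnet18:supplement}: one must carry the factor $\|\nu^0\|_1\asymp K$ (and the $\log T$ coming from integrating the Poisson intensity over a window of length $\asymp T$) all the way to the end, which is what forces $J_T\lesssim T/\max(\log T,\log K)$ rather than merely $J_T\lesssim T/\log T$.
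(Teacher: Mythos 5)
Your proposal is correct and follows essentially the same route as the paper's proof: both exploit the Poisson cluster representation, the observation that only clusters of depth $\gtrsim T/(AJ_T)$ can contribute to $\bar N^m(I_m)$, a uniform (in $\ell$ and $K$) exponential moment for cluster sizes with radius $t(c)=c-1-\log c$, and a Chernoff bound in which the total immigration rate $\|\nu^0\|_1\asymp K$ is carried explicitly to force $J_T\lesssim T/\max(\log T,\log K)$. The only cosmetic differences are bookkeeping: the paper bins immigrant births into unit time slots and splits into two sums (births in $[1,T-\lfloor T/(2J_T)\rfloor]$ versus $(-\infty,0]$) and Chernoff-weights the excess $(W-n^*)_+$ at $t=t(c)$, whereas you apply Campbell's formula over the Poisson cluster process directly, truncate at $|C|\ge n^*$ and weight at $\theta=t(c)/4$; both bookkeeping choices give the same exponent.
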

\begin{proof}[Proof of Lemma \ref{lem6:donnetsup}]
We follow the proof of Lemma 6 of the supplement of \cite{donnet18:supplement}:
by construction $\bar N^m$ only contains points whose ancestors are born before $(2m-1)T/(2J_T)$ so that the distance between an occurence of an ancestor in $\bar N^m$ and $I_m$ is at least $T/(2J_T)-A$. Using the cluster representation of the process and as in \cite{donnet18}, 
\begin{align*}
\sum_{m=1}^{J_T} \bar N^m(I_m)& \leq \sum_{k=1}^K \sum_{p=1}^{T - \lfloor T/(2J_T)\rfloor} \sum_{i=1}^{B_{p,k}} \left( W_{p,i}^k-\frac{ 1}{A} \left( \frac{ T}{2J_T}-A\right)\right)_+ \\
 & \quad + \sum_{k=1}^K \sum_{p=-\infty}^{0} \sum_{i=1}^{B_{p,k}} \left( W_{p,i}^k-\frac{ 1}{A} \left( -p -1  + \frac{ T}{2J_T}-A\right)\right)_+\\
  & := T_1+ T_2
\end{align*}
where the $B_{p;k}$ are independent Poisson $\mathcal P(\nu_k^0)$ and $W_{p,i}^k$ is the number of points generated by ancestor $i$. For each $k$ $(W_{p,i}^k)_i$ are iid with the same distribution as $W^k$. We therefore have, using Lemma \ref{lemW}  for all $k$
$$ \mathbb E_0( e^{t W^k} ) \leq e^{x(t,c)} , \quad \forall t \leq c-1-\log(c):= t(c)$$ 
where $x(t,c)$ is the solution smaller than $\log(1/c)$ of the equation 
$ x -c(e^x-1)=t$.
Therefore for all $x>0$ and $t < t(c)$
\begin{align*}
\mathbb P_0( T_1 > xT ) &\leq e^{-t x T} \prod_{k=1}^K \prod_{p=1}^{T - \lfloor T/(2J_T)\rfloor} \mathbb E_0(H_k(t)^{B_{p,k}} ), \quad H_k(t) = \mathbb E_0\left( e^{t \left( W^k-\frac{ 1}{A} \left( \frac{ T}{2J_T}-A\right)\right)_+ } \right)\\
& \leq  e^{-t x T +  \left(T - \lfloor T/(2J_T)\rfloor\right)\sum_{k=1}^K \nu_k^0(H_k(t)-1)}
\end{align*}
Moreover choosing $t=t(c)$
\begin{align*}
H_k(t) &\leq 1 + \frac{e^{ - t(c)[T/(2J_T)-A]}}{c}, \quad 
 \sum_k\nu_k^0( H_k(t) -1)& \leq \frac{ e^{t(c)A} \|\nu^0\|_1 e^{ - t(c)T/(2J_T)} }{c} =o(1)
\end{align*}
as soon as $ T/J_T\geq 2\log ( \|\nu^0\|_1  )/t(c)$. 
Then
choosing  $x \gtrsim 1/T$ 
\begin{align*}
\mathbb P_0( T_1 > xT ) 
& \leq  e^{-t(c) x T /2}  
\end{align*}
Replacing for $p\leq 0$,   $H_k(t) $ by 
\begin{align*}
 \mathbb E_0\left( e^{t \left( W^k-p-1 - \frac{ 1}{A} \left( \frac{ T}{2J_T}-A\right)\right)_+ } \right) & \leq 1 + \frac{e^{  t(c)( A+ p+ 1) } e^{ - t(c)T/(2J_T)} }{c},
 \end{align*}
 leads to 
 $$P( T_2 > x T) \leq e^{-t(c)xT}e^{\frac{  \|\nu^0\|_1e^{ - t(c)T/(2J_T)} e^{  t(c)( A+1)}    }{c (1-e^{-t(c)}) }} \leq e^{-t(c) x T/2}. $$
\end{proof}

Lemma \ref{lem6:donnetsup} is crucial and is used in particular to prove that uniformly in $k$ $|N^k[0,T]/T -\mu_k^0| \leq \delta_T= o(1)$ with high probability. 
Denote by 
 $$ \Omega_{T,k}(\delta_T)  = \left\{ \left| \frac{  N^k[B_1, T-B_2] }{ T } - \mu_k^0 \right| \leq  \delta_T\right\}$$
 and recall that 
 $$ \Omega_T = \cap_{k\in [K]} \Omega_{T,k}(\delta_T.$$

\subsubsection{Lemma \ref{lem:devNT}}
\begin{lemma}\label{lem:devNT}
Under the Hypotheses \ref{cond:suprho}-\ref{cond:sparsity2}  and if $\log K = o(\sqrt{T})$, we have  for all $B_1, B_2$ fixed and
for any $o(1) = \delta_T >> (\log K)/\sqrt{T}$.
 $$ \mathbb P_0\left( \max_k \left| \frac{  N^k[B_1, T-B_2] }{ T } - \mu_k^0 \right| >\delta_T\right) \leq e^{-\frac{ \bar \gamma \delta_T^2 T}{ 8 (\log K) }} + 2T^{-1/2},$$ 
 where $\bar \gamma$ depends on $c, R_1, R_\infty, c_0, C_0 $.
\end{lemma}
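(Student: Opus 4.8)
The plan is to combine a coordinatewise union bound with a Bernstein‑type deviation inequality for an i.i.d.\ block decomposition of each $N^k$ obtained from the cluster representation, the same representation that underlies Lemma~\ref{lem6:donnetsup}. First I would reduce to a single coordinate $k$: it suffices to prove $\mathbb P_0(|N^k[B_1,T-B_2]/T-\mu_k^0|>\delta_T)\le e^{-\bar\gamma'\delta_T^2T/\log K}+2T^{-1/2}/K$, since the factor $K=e^{\log K}$ picked up by the union bound over the $K$ coordinates will be absorbed: the hypothesis $\delta_T\gg(\log K)/\sqrt T$ is exactly $(\log K)^2=o(\delta_T^2T)$, so $\log K-\bar\gamma'\delta_T^2T/\log K\le-\tfrac12\bar\gamma'\delta_T^2T/\log K$ for $T$ large (and $\delta_T=o(1)$ with $\delta_T\gg(\log K)/\sqrt T$ are simultaneously possible precisely because $\log K=o(\sqrt T)$). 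By stationarity $\mathbb E_0[N^k[B_1,T-B_2]]=\mu_k^0(T-B_1-B_2)$, and since $\mu_k^0\le C_0$ (Lemma~\ref{lem:lambda2}) the deterministic offset $\mu_k^0(B_1+B_2)/T$ is $O(1/T)=o(\delta_T)$, so I can work with $|N^k[B_1,T-B_2]-\mu_k^0T|$.

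Next I would set $J_T=\lfloor J_0T/\log K\rfloor$ with $J_0$ small enough for Lemma~\ref{lem6:donnetsup} to apply (note $\log\|\nu^0\|_1\le\log(c_1K)\asymp\log K$), write $\ell=T/J_T\asymp\log K$, and let $N^{0,m}$, $m\le J_T$, be the i.i.d.\ cluster subprocesses of that lemma. Put $Z_m:=N^{k,0,m}(\mathbb R)$, the number of type‑$k$ points in the $m$‑th cluster block; the $Z_m$ are i.i.d.\ with $\mathbb E_0[Z_m]=\mu_k^0\ell$ (the expected number of type‑$k$ points generated over a root window of length $\ell$ is $\ell\sum_{k'}\nu_{k'}^0\mathbb E_0[W^{k'}_{\to k}]=\ell\mu_k^0$), so $\sum_{m=1}^{J_T}\mathbb E_0[Z_m]=\mu_k^0T$, and $\sum_{m=1}^{J_T}Z_m$ differs from $N^k[B_1,T-B_2]$ only by a boundary term of exactly the kind Lemma~\ref{lem6:donnetsup} controls (points of a block falling outside $[B_1,T-B_2]$, and points in $[B_1,T-B_2]$ whose root predates the first generating window), which is $\lesssim\log T=o(\delta_TT)$ off an event of probability at most $2T^{-1/2}$ after taking the free constant in Lemma~\ref{lem6:donnetsup} equal to $2/t(c)$. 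Everything thus reduces to $\mathbb P_0(|\sum_{m=1}^{J_T}(Z_m-\mu_k^0\ell)|>\delta_TT/2)$.

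For that I would apply Bernstein's inequality to the i.i.d.\ centred sum. The cluster (compound Poisson) structure gives $\log\mathbb E_0[e^{tZ_m}]=\ell\sum_{k'}\nu_{k'}^0(\mathbb E_0[e^{tW^{k'}_{\to k}}]-1)$ with $W^{k'}_{\to k}$ the number of type‑$k$ descendants of one type‑$k'$ root, and Hypotheses~\ref{cond:suprho}--\ref{cond:sparsity2} supply (as in Lemma~\ref{lem6:donnetsup} and Lemma~\ref{lem:lambda2}) a fixed $t_0>0$ with $\sup_{k',k}\mathbb E_0[e^{t_0W^{k'}_{\to k}}]<\infty$; a second‑order expansion then gives $\log\mathbb E_0[e^{t(Z_m-\mu_k^0\ell)}]\le t^2v_K$ for $|t|\le t_0/2$, with $v_K=O((\log K)^2)$. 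Bernstein then yields $\mathbb P_0(|\sum_m(Z_m-\mu_k^0\ell)|>\delta_TT/2)\le 2\exp(-\tfrac12\min(\delta_T^2T^2/(4J_Tv_K),t_0\delta_TT/2))$; since $J_T\asymp T/\log K$, $v_K=O((\log K)^2)$ and $\delta_T=o(1)$, the first term in the minimum dominates and is $\gtrsim\delta_T^2T/\log K$, which combined with the union‑bound argument of the first paragraph gives the stated $e^{-\bar\gamma\delta_T^2T/(8\log K)}$ (with $\bar\gamma$ absorbing the numerical constants and the constant in $v_K$), the $2T^{-1/2}$ being the boundary event.

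The step I expect to be the main obstacle is the $K$‑uniform, merely polylogarithmic, bound $v_K=O((\log K)^2)$ on the per‑block variance proxy. This is where Hypothesis~\ref{cond:suprho} is essential: the dimension‑free operator‑norm bounds feed into $\mathbb E_0[(\lambda_0^{(k)})^2]\le\bar C_0$ (Lemma~\ref{lem:lambda2}), and it is this $K$‑free control — rather than the naive count $\asymp K$ of roots in a window, each with a possibly large cluster — that must be exploited, e.g.\ by bounding $\sum_{k'}\nu_{k'}^0(W^{k'}_{\to k})^2e^{t_0W^{k'}_{\to k}}$ by $\max_{k'}(W^{k'}e^{t_0W^{k'}})\cdot\sum_{k'}W^{k'}_{\to k}$, the first factor being a maximum of $O(K)$ sub‑exponential variables hence $O(\log K)$ in the relevant sense, the second having expectation $\mu_k^0\le C_0$, and $\ell\asymp\log K$. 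A secondary delicacy is that cluster sizes are only sub‑exponential, so one must use a Bernstein inequality rather than Hoeffding's.
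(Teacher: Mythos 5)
Your overall strategy matches the paper's: both split $[0,T]$ into $J_T \asymp T/\log K$ blocks, pass to the i.i.d.\ cluster subprocesses $N^{0,m}$ of Lemma~\ref{lem6:donnetsup}, control the exchange error via that lemma, apply a Chernoff/Bernstein bound to the i.i.d.\ centred sum with a variance proxy of order $(\log K)^2$, and finish with a union bound over $k$ absorbed by $\delta_T\gg(\log K)/\sqrt T$. However, your specific choice of block variable $Z_m=N^{k,0,m}(\mathbb R)$ (the \emph{unrestricted} count of all type-$k$ points in the $m$-th cluster, wherever they fall in time) creates two gaps that the paper avoids.

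First, the discrepancy $|N^k[B_1,T-B_2]-\sum_m Z_m|$ is not controlled by Lemma~\ref{lem6:donnetsup}. That lemma bounds $\sum_m \bar N^m(I_m)$, i.e.\ points of $N$ in the finite window $I_m$ that are \emph{not} in $N^{0,m}$; it says nothing about the forward tail of the cluster $N^{0,m}$, namely points of $N^{k,0,m}$ that fall far to the right of the generating window and hence contribute to $Z_m$ but not to any fixed interval. You would need a separate sub-exponential tail estimate for the spatial extent of clusters, which is not in the toolbox you cite. Second, and this is the obstacle you yourself flag, the compound-Poisson identity $\log\mathbb E_0[e^{t(Z_m-\mu_k^0\ell)}]=\ell\sum_{k'}\nu_{k'}^0\bigl(\mathbb E_0[e^{tW^{k'\to k}}]-1-t\mathbb E_0[W^{k'\to k}]\bigr)$ has a sum over all $K$ root types, and a naive bound gives $\ell\,\|\nu^0\|_1\cdot O(t^2)\asymp K\log K\cdot t^2$, not $(\log K)^2 t^2$. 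Your proposed remedy (``bounding $\sum_{k'}\nu_{k'}^0(W^{k'}_{\to k})^2e^{t_0W^{k'}_{\to k}}$ by $\max_{k'}(W^{k'}e^{t_0W^{k'}})\cdot\sum_{k'}W^{k'}_{\to k}$, the first factor being a maximum of $O(K)$ sub-exponential variables hence $O(\log K)$'') conflates random variables with their expectations; the deterministic quantity inside the Laplace transform cannot be bounded by a maximum of random variables, and $\|W^{k'}e^{t_0W^{k'}}\|_\infty=\infty$. You would need a genuinely structural cancellation (essentially that the second-moment analogue of $\sum_{k'}\nu_{k'}^0\mathbb E[W^{k'\to k}]=\mu_k^0\leq C_0$ is also dimension-free), which is true but requires its own argument.

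The paper sidesteps both issues by taking $\tilde Z_{m,k}$ to be the count of $N^{0,k,m}$ restricted to a \emph{half}-interval of length $B=T/(2J_T)\asymp\log K$. Then $\tilde Z_{m,k}\leq N^k$ restricted to an interval of length $B$ pathwise, so the dimension-free exponential moment $\mathbb E_0[e^{tN^k(0,B)}]\leq e^{t\gamma B}$ of Lemma~\ref{lem:lambda2} (itself a consequence of Hypothesis~\ref{cond:suprho} via \cite{leblanc2025}) applies directly; the middle-half gap ensures the exchange term is exactly $\sum_m\bar N^m(I_m)$, which Lemma~\ref{lem6:donnetsup} does control; and the variance proxy comes out as $\asymp B^2\asymp(\log K)^2$ by choosing the tilting parameter $t_0=J_T/T$. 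If you adopt the restricted-count block variable, the rest of your argument goes through and is equivalent to the paper's.
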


\begin{proof}[Proof of Lemma \ref{lem:devNT}]
First we write 
We split $[B_1, T-B_2]$ in $J_T \leq   \frac{ Tt(c) }{ 3\log ( \|\nu^0\|_1  )} = J_0 T/\log K$  equal size intervals $J_0 = \frac{t(c)}{3} \frac{ \log K}{ \log (\|\nu^0\|_1)} =\frac{t(c)}{3}( 1 + o(1)) $. Each interval has size $(T-B_2-B_1)/J_T$ and for the sake of simplicity and without loss of generality we can set $B_1=B_2=0$ so that the intervals are $ [mT/J_T, (m+1)T/J_T]$, $m=0, \cdots , J_T-1$. We split 
$ [mT/J_T, (m+1)T/J_T] = [mtJ_T, (2m+1)T/(2J_T)] \cup [(2m+1)T/(2J_T),(m+1)T/J_T] $ and we write 
\begin{align*}
\frac{N^k[0,T]}{T}-\mu_k^0 &= \sum_{m=0}^{J_T-1}\frac{  N^k[mtJ_T, (2m+1)T/(2J_T)] - \mu_k^0 T/(2J_T) }{ T} \\
  & \quad + \sum_{m=0}^{J_T-1}\frac{  N^k[(2m+1)T/(2J_T),(m+1)T/J_T]  - \mu_k^0 T/(2J_T) }{ T}
  \end{align*}
  and we study each term separately using the same method. 
  Let $Z_{m,k}= N^k[(2m+1)T/(2J_T),(m+1)T/J_T]  - \mu_k^0 T/(2J_T)$ and construct 
  $\tilde Z_{m,k}= N^{0,k,m}[(2m+1)T/(2J_T),(m+1)T/J_T]  - \mathbb E_0(N^{0,k,m}[(2m+1)T/(2J_T),(m+1)T/J_T] )$
  where  $N^{0,k,m}$ is the subprocess of $N^k$ constructed in Lemma \ref{lem6:donnetsup}. Writing $\bar N^{k,m} = N^k - N^{0,k,m}$ and $B_m = [(2m+1)T/(2J_T),(m+1)T/J_T]$, for all $k$, we have 
  \begin{align*}
 \sum_{m=0}^{J_T-1} |Z_{m,k} - \tilde Z_{m,k} |& \leq \sum_{m=0}^{J_T-1}\bar N^{k,m}[(2m+1)T/(2J_T),(m+1)T/J_T] \\
  & \quad + \mathbb E_0(\bar N^{k,m}[(2m+1)T/(2J_T),(m+1)T/J_T] ) \\
 & \leq \sum_{m=0}^{J_T-1} [\bar N^{m}(I_m) + \mathbb E_0 [\bar N^{m}(I_m) ].
\end{align*}
Using Lemma \ref{lem6:donnetsup} we have choosing $x = T^{-1/2}$, 
$$\mathbb P_0 \left( \frac{\sum_{m=0}^{J_T-1} \bar N^{m}(I_m) }{T}> T^{-1/2} \right) \leq ee^{- t(c) T^{1/2}}$$
and we also have 
\begin{align*}
 \mathbb E_0 \left( \frac{\sum_{m=0}^{J_T-1} \bar N^{m}(I_m) }{T} \right)& \leq T^{-1/2} + \int_{T^{-1/2}}^\infty \mathbb P_0 \left( \frac{\sum_{m=0}^{J_T-1} \bar N^{m}(I_m) }{T}> x \right) dx \\
  & \leq T^{-1/2}+ \frac{ ee^{- t(c) T^{1/2} } }{ t(c) } \leq 2T^{-1/2}
  \end{align*}
  for $T$ large enough (depending only on $c$).
We thus obtain that 
$$\frac{ \sum_{m=0}^{J_T-1} Z_{m,k}}{ T}  = \frac{\sum_{m=0}^{J_T-1} \tilde Z_{m,k} }{ T} + O_{P_0}( T^{-1/2})$$
uniformly in $k$ and $\tilde Z_{m,k} = N^{0,k,m}[(2m+1)T/(2J_T),(m+1)T/J_T]  - \mathbb E_0(N^{0,k,m}[(2m+1)T/(2J_T),(m+1)T/J_T] )$ are iid random variables.

 We have 
 $$ \mathbb E_0(N^{0,k,m}[(2m+1)T/(2J_T),(m+1)T/J_T] )\leq \mathbb E_0(N^{k}[(2m+1)T/(2J_T),(m+1)T/J_T] ) \leq T \mu_k^0/2J_T.$$  We now study with $B =  T/(2J_T)$ and $t < t(c)$ defined in Lemma \ref{lem:lambda2}
 \begin{align*}
  I_k(m) & = \mathbb E_0( e^{tN^{0,k,m}[(2m+1)T/(2J_T),(m+1)T/J_T]}) \leq  \mathbb E_0( e^{tN^{k}[(0,B]}) \\
  & \leq  \exp\left( B\gamma t \right) = \exp\left( \frac{ T\gamma t}{ 2J_T} \right).
  \end{align*}
  This leads to 
    \begin{align*}
\mathbb P_0 \left( \frac{ | \sum_{m=1}^{J_T} \tilde Z_{m,k}| }{T} > \delta_T \right)  &\leq
\mathbb P_0 \left( \frac{  \sum_{m=1}^{J_T} \tilde Z_{m,k} }{T} > \delta_T \right) + \mathbb P_0 \left( \frac{  -\sum_{m=1}^{J_T} \tilde Z_{m,k} }{T} > \delta_T \right) \\
& \leq e^{- t \delta_T T} \mathbb E_0(e^{ t \tilde Z_{m,k}})^{J_T} +
e^{- t \delta_T T} \mathbb E_0(e^{ -t \tilde Z_{m,k}})^{J_T} .
\end{align*}
Let $t < t(c)$, then 
\begin{align*}
\mathbb E_0(e^{ t \tilde Z_{m,k}}) &\leq  e^{-t 1\mathbb E_0(N^{0,k,m}[(2m+1)T/(2J_T),(m+1)T/J_T] )} \times \\
& \left( 1 + t \mathbb E_0( N^{0,k,m}[(2m+1)T/(2J_T),(m+1)T/J_T] )+ t^2 \mathbb E_0(  N^{k}[0,B]^2  e^{tN^{k}[0,B]})  \right)\\
& \leq e^{ t^2 \mathbb E_0(  N^{k}[0,B]^2  e^{tN^{k}[0,B]})}. 
\end{align*}
We choose $2t < t_0 < t(c)$, then using $x^2 \leq e^{tx}/t_0^2$,   we bound
$$\mathbb E_0(  N^{k}[0,B]^2  e^{tN^{k}[0,B]} ) \leq \frac{ 1 }{ t_0^2 } \mathbb E_0(   e^{2t_0N^{k}[0,B]} ) \leq \frac{ e^{ T\gamma t_0 /J_T }}{t_0^2 }  $$
which in turns implies that  for $t_0= J_T/T=o(1)$,  and $\delta_T=o(1)$ 
  \begin{align*}
\mathbb P_0 \left( \frac{ \sum_{m=1}^{J_T} \tilde Z_{m,k} }{T} > \delta_T \right)
& \leq 
e^{- t \delta_T T+  t^2 \frac{T^2e^{\gamma}}{ J_T} } \leq e^{ - \frac{ J_T \delta_T^2 e^{-\gamma} }{4} }= o(1/K)
\end{align*}
as soon as  $ \log K = o( T^{1/2}) $  and $o(1) = \delta_T >> \sqrt{ (\log K)/J_T}$, since $J_T = J_0T/ \log K $ if $K>> 1$ or $J_T = J_0 T/\log T$ if $K$ is bounded. The same computation allow also  to bound 
  \begin{align*}
\mathbb P_0 \left( \frac{- \sum_{m=1}^{J_T} \tilde Z_{m,k} }{T} > \delta_T \right)
& \leq 
 e^{ - \frac{ J_T \delta_T^2  }{4\bar C_0} }= o(1/K)
\end{align*}
which terminates the proof of Lemma \ref{lem:devNT}.  
\end{proof}

\section*{Acknowledgments.}

We would like to thank Th\'eo Leblanc for useful discussions on the sharpness of the conditions to derive exponential moments for the number of events.

\bibliographystyle{siamplain}
\bibliography{biblio} 

\begin{thebibliography}{10}

\bibitem{bacry-etal-2020}
{\sc E.~Bacry, M.~Bompaire, S.~Ga\"iffas, and J.-F. Muzy}, {\em Sparse and
  low-rank multivariate {H}awkes processes}, J. Mach. Learn. Res., 21 (2020),
  pp.~Paper No. 50, 32.

\bibitem{cai2024latent}
{\sc B.~Cai, J.~Zhang, and Y.~Guan}, {\em Latent network structure learning
  from high-dimensional multivariate point processes}, Journal of the American
  Statistical Association, 119 (2024), pp.~95--108.

\bibitem{chen2019}
{\sc S.~Chen, A.~Shojaie, E.~Shea-Brown, and D.~Witten}, {\em The multivariate
  hawkes process in high dimensions: Beyond mutual excitation}, 2019,
  \url{https://arxiv.org/abs/1707.04928},
  \url{https://arxiv.org/abs/1707.04928}.

\bibitem{Chen2017}
{\sc S.~Chen, D.~Witten, and A.~Shojaie}, {\em {Nearly assumptionless screening
  for the mutually-exciting multivariate Hawkes process}}, Electronic Journal
  of Statistics, 11 (2017), pp.~1207--1234,
  \url{https://doi.org/10.1214/17-EJS1251}.

\bibitem{daley01}
{\sc D.~J. Daley}, {\em The busy period of the m/gi/? queue}, Queueing Systems,
   (2001).

\bibitem{donnet18}
{\sc S.~Donnet, V.~Rivoirard, and J.~Rousseau}, {\em {Nonparametric Bayesian
  estimation for multivariate Hawkes processes}}, The Annals of Statistics, 48
  (2020), pp.~2698 -- 2727, \url{https://doi.org/10.1214/19-AOS1903},
  \url{https://doi.org/10.1214/19-AOS1903}.

\bibitem{donnet18:supplement}
{\sc S.~Donnet, V.~Rivoirard, and J.~Rousseau}, {\em {Nonparametric Bayesian
  estimation for multivariate Hawkes processes}}, The Annals of Statistics, 48
  (2020), pp.~2698 -- 2727, \url{https://doi.org/10.1214/19-AOS1903},
  \url{https://doi.org/10.1214/19-AOS1903}.

\bibitem{ghosal00}
{\sc S.~Ghosal, J.~K. Ghosh, and A.~W. van~der Vaart}, {\em {Convergence rates
  of posterior distributions}}, The Annals of Statistics, 28 (2000), pp.~500 --
  531, \url{https://doi.org/10.1214/aos/1016218228},
  \url{https://doi.org/10.1214/aos/1016218228}.

\bibitem{ghosal:vdv:07}
{\sc S.~Ghosal and A.~van~der Vaart}, {\em Convergence rates of posterior
  distributions for non iid observations}, The Annals of Statistics, 35 (2007),
  pp.~192--223.

\bibitem{Hoffmann2013OnAP}
{\sc M.~Hoffmann, J.~Rousseau, and J.~Schmidt-Hieber}, {\em On adaptive
  posterior concentration rates}, The Annals of Statistics, 43 (2015),
  pp.~2259--2295.

\bibitem{kreiss2025commondriverssparselyinteracting}
{\sc A.~Kreiss, E.~Mammen, and W.~Polonik}, {\em Common drivers in sparsely
  interacting hawkes processes}, 2025, \url{https://arxiv.org/abs/2504.03916},
  \url{https://arxiv.org/abs/2504.03916}.

\bibitem{leblanc2025}
{\sc T.~Leblanc}, {\em Sharp and exact tail estimates for multitype poissonian
  galton watson processes and inhomogeneous cluster processes}, 2025,
  \url{https://arxiv.org/abs/2507.08462},
  \url{https://arxiv.org/abs/2507.08462}.

\bibitem{Rousseau2016AsymptoticF}
{\sc J.~Rousseau and B.~Szabo}, {\em {Asymptotic frequentist coverage
  properties of Bayesian credible sets for sieve priors}}, The Annals of
  Statistics, 48 (2020), pp.~2155 -- 2179,
  \url{https://doi.org/10.1214/19-AOS1881},
  \url{https://doi.org/10.1214/19-AOS1881}.

\bibitem{sulem2024bayesian}
{\sc D.~Sulem, V.~Rivoirard, and J.~Rousseau}, {\em Bayesian estimation of
  nonlinear hawkes processes}, Bernoulli, 30 (2024), pp.~1257--1286.

\bibitem{Tang-Li-2023}
{\sc X.~Tang and L.~Li}, {\em Multivariate temporal point process regression},
  J. Amer. Statist. Assoc., 118 (2023), pp.~830--845,
  \url{https://doi.org/10.1080/01621459.2021.1955690},
  \url{https://doi.org/10.1080/01621459.2021.1955690}.

\bibitem{wang2025statistical}
{\sc X.~Wang, M.~Kolar, and A.~Shojaie}, {\em Statistical inference for
  networks of high-dimensional point processes}, Journal of the American
  Statistical Association, 120 (2025), pp.~1014--1024.

\end{thebibliography}

\end{document}